\documentclass{article}[12pt]
\textwidth=125mm
\textheight=185mm
\headheight=10mm

\usepackage[dvips]{graphicx}
\usepackage{amsfonts}
\usepackage{amsmath}
\usepackage{amssymb}
\usepackage{amscd}
\usepackage{color}
\usepackage{amsthm}
\usepackage{indentfirst}
\usepackage[hmargin=3cm,vmargin=3cm]{geometry}
\usepackage{parskip}

\newtheorem{thm}{Theorem}
\newtheorem{thm*}{Theorem}
\newtheorem{prop}{Proposition}[section]
\newtheorem{lma}{Lemma}

\newtheorem{cor}{Corollary}

\theoremstyle{definition}

\theoremstyle{remark}

\newtheorem{rmk}{Remark}[subsection]

\newtheorem{qtn}{Question}

\newcommand{\R}{{\mathbb{R}}}
\newcommand{\Z}{{\mathbb{Z}}}

\newcommand{\D}{{\mathbb{D}}}

\newcommand{\del}{\partial}

\newcommand\vol{\operatorname{vol}}

\newcommand{\G}{\mathcal{G}}

\newcommand{\til}[1]{\widetilde{#1}}

\newcommand{\cG}{\mathcal{G}}


\newcommand{\ol}[1]{\overline{#1}}

\def\cl#1{{\mathcal{#1}}}


\DeclareMathOperator{\Diff}{\mathrm{Diff}}
\DeclareMathOperator{\Ham}{\mathrm{Ham}}

\DeclareMathOperator{\Cal}{\mathrm{Cal}}

\DeclareMathOperator{\id}{\mathbf{1}}

\begin{document}

\title{\textbf{The Schwarz-Milnor lemma for braids and area-preserving diffeomorphisms}}


\author{\textsc{Michael Brandenbursky, Micha\l{} Marcinkowski, and Egor Shelukhin}\\}

\date{}
\maketitle

\begin{abstract}
We prove a number of new results on the large-scale geometry of the $L^p$-metrics on the group of area-preserving diffeomorphisms of each orientable surface. Our proofs use in a key way the Fulton-MacPherson type compactification of the configuration space of $n$ points on the surface due to Axelrod-Singer and Kontsevich. This allows us to apply the Schwarz-Milnor lemma to configuration spaces, a natural approach which we carry out successfully for the first time. As sample results, we prove that all right-angled Artin groups admit quasi-isometric embeddings into the group of area-preserving diffeomorphisms endowed with the $L^p$-metric, and that all Gambaudo-Ghys quasi-morphisms on this metric group coming from the braid group on $n$ strands are Lipschitz. This was conjectured to hold, yet proven only for low values of $n$ and the genus $g$ of the surface.
\end{abstract}



\tableofcontents

\section{Introduction and main results}

\subsection{Introduction}


The $L^2$-length of a path of volume-preserving diffeomorphisms, which describes a time-dependent flow of an ideal incompressible fluid, corresponds to the hydrodynamic action of the flow in the same way as the length of a path in a Riemannian manifold corresponds to its energy (cf. \cite{ShnirelmanGeneralized}). Indeed, it is the length of this path with respect to the formal right-invariant Riemannian metric on the group $\G$ of volume preserving diffeomorphisms introduced by Arnol'd in \cite{ArnoldGeodesics}. The $L^1$-length of the same path has a dynamical interpretation as the average length of a trajectory of a point under the flow.

Therefore, following the principle of least action, it makes sense to consider the infimum of the lengths of paths connecting two fixed volume-preserving diffeomorphisms. This gives rise to a right-invariant distance function (metric) on $\G.$ Taking the identity transformation as the initial point, Arnol'd observes that a path whose $L^2$-length is minimal (and equal to the distance) necessarily solves the Euler equation of an ideal incompressible fluid.

It follows from works of Ebin and Marsden \cite{EbinMarsden} that for diffeomorphisms in $\G$ that are $C^2$-close to the identity, the infimum is indeed achieved. Further, more global results on the corresponding Riemannian exponential map were obtained in \cite{EbinPrestonMisiolek},\cite{ShnirelmanRuled} (see also \cite{EbinSymp}). In \cite{ShnirelmanGeometry,ShnirelmanGeneralized} Shnirel'man showed, among a number of surprising facts about this subject, that in the case of the ball of dimension $3,$ the diameter of the $L^2$-metric is bounded. This result is conjectured to hold for all compact simply connected manifolds of dimension $3$ or larger (see \cite{EliashbergRatiu, KhesinWendt, ArnoldKhesin}), while its analogue in the non-simply-connected case is false \cite{EliashbergRatiu,BrandenburskyLpMetrics}. Furthermore, Shnirel'man has conjectured that for compact manifolds of dimension $2,$ the $L^2$-diameter is infinite. 

Shnirel'man's conjecture, and its analogues for $L^p$-metrics, with $p \geq 1$ are by now proven. It follows from results of Eliashberg and Ratiu \cite{EliashbergRatiu} that on compact surfaces (possibly with boundary) other than $T^2$ and $S^2,$ Shnirel'man's conjecture holds for all $p \geq 1.$ Their arguments rely on the Calabi homomorphism $\Cal$ \cite{CalabiHomomorphism} from the compactly supported Hamiltonian group $\Ham_c(M,\sigma)$ to the real numbers in the case of a surface $M$ with non-empty boundary ($\sigma$ is the area form), and on non-trivial first cohomology combined with trivial center of the fundamental group in the closed case. For the two-torus $T^2$ Shnirel'man's conjecture holds by \cite[Appendix A]{BrandShelS2}. Finally, the case of $S^2$ was settled in \cite{BrandShelS2} by means of differential forms on the configuration space related to the cross-ratio map. In \cite{MichalM-diameter} the second author gave a new uniform proof of Shnirel'man's conjecture for all compact surfaces.

The methods that were used to prove Shnirel'man's conjecture are two-dimensional in nature, and have to do with braiding of trajectories  of time-dependent two-dimensional Hamiltonian flows (in extended phase space). Indeed, Shnirel'man has proposed to use relative rotation numbers to bound from below the $L^2$-lengths of two-dimensional Hamiltonian paths in \cite{ShnirelmanGeneralized}. This direction is related to the method of \cite{EliashbergRatiu} by a theorem of  Fathi \cite{FathiThese} and Gambaudo and Ghys \cite{GambaudoGhysEnlacements} (see also \cite{ESER, Haissinsky,Hum}). This theorem shows that the Calabi homomorphism is proportional to the relative rotation number of the trajectories of two distinct points in the two-disc $\D$ under a Hamiltonian flow, averaged over the configuration space of ordered pairs of distinct points $(x_1,x_2)$ in the two-disc. 

This line of research was notably pursued in \cite{GambaudoLagrange}, and further in \cite{BenaimGambaudo}, \cite{CrispWiest}, \cite{BrandenburskyLpMetrics}, \cite{BrandenburskyKedra1}, \cite{KimKoberdaAntiTrees}, \cite{BrandShelS2} obtaining quasi-isometric and bi-Lipschitz embeddings of various groups (right-angled Artin groups and additive groups of finite-dimensional real vector spaces) into $\Ham_c(\D^2,dx\wedge dy),$ into $\ker(\Cal) \subset \Ham_c(\D^2,dx\wedge dy),$ and into $\Ham(S^2,\sigma)$ endowed with their respective $L^p$-metrics (see \cite{BrandenburskyKedra2} for similar embedding results on manifolds with a sufficiently complicated fundamental group). In all cases, the key technical estimate is an upper bound, via the $L^p$-length of an isotopy of volume-preserving diffeomorphisms, of the average, over all points in a configuration space of the manifold, of the word length in the fundamental group of the trace of the point under the induced isotopy (closed up to a loop by a system of short paths on the configuration space). 

Such estimates were initially produced by means of analyzing relative rotation numbers of pairs of braids, or quadruples as in \cite{BrandShelS2}. However, in the case of a single braid, as observed by Polterovich, a simpler estimate is possible via the Schwarz-Milnor lemma\footnote{``Schwarz" is a transliteration from Russian; alternative spellings ``\v{S}varc", ``Schwartz" sometimes appear in the literature.} \cite{Efremovic,Svarc,Milnor}. Indeed, according to this result, the universal cover of compact proper length space, in this case our surface, is quasi-isometric to its fundamental group. In this way one can control the word length of trajectories in terms of their lengths, which allows one to prove the required upper bound. A similar estimate in the case of braids on more than one strand is not as readily available, because the configuration space $X_n(M)$ of $n$ points on $M$ is not a compact metric space. The arguments of the second author in \cite{MichalM-diameter} rely crucially on a new complete metric on $X_n(M).$ However, with this metric $X_n(M)$ still can not be considered to be a compact metric space from the point of view of the Schwarz-Milnor lemma.

In this paper we show how to successfully carry out the strategy of the Schwarz-Milnor lemma for configuration spaces. While hints of a similar approach can be discerned in \cite{GambaudoPecou} in the special case of the two-disc and of double collisions, it was not known earlier to be applicable in the general context discussed herein. Specifically, we consider a metric on $X_n(M)$ coming from a natural compactification $\overline{X}_n(M)$ thereof, which is a compact geodesic metric space. This compactification is equivariant under the action of the diffeomorphism group $\Diff(M)$ on $X_n(M)$ in the sense that the action extends naturally to the compactification\footnote{It is curious to note that the same is {\em not} true for the action of $\mathrm{Homeo}(M)$ as was recently proven \cite{Kupers-notop}.}. Furthermore, the map $\pi_1(X_n(M)) \to \pi_1(\overline{X}_n(M))$ induced by the inclusion is an isomorphism. Hence we may apply the Schwarz-Milnor lemma to $\overline{X}_n(M).$ This, and further comparison to the metric from \cite{MichalM-diameter} allows us to prove our main estimate.

The compactification we use was introduced by Axelrod-Singer in \cite{AxelrodSinger-comp} and by Kontsevich in \cite{Kontsevich-ECM, Kontsevich-comp}, inspired by the Fulton-MacPherson compactification in algebraic geometry \cite{FultonMacPherson-comp}. The compactification $\ol{X}_n(M)$ is roughly speaking a certain positive oriented blow-up of $M^n$ along its multi-diagonals. For instance, one part of its codimension $1$ boundary stratum is identified with the disjoint union of spaces of the form $N_1(D_{ij}(M))|_{D^0_{ij}(M)}$ where $N_1$ is the unit normal bundle, $D_{ij}(M) \subset M^n$ is the submanifold of points $(x_1,\ldots,x_n)$ where $x_i = x_j$ and $D^0_{ij}(M) \subset D_{ij}(M)$ is the open dense subset where $x_k \neq x_i = x_j$ for all $k \in \{1,\ldots,n\} \setminus \{i,j\}.$ Note that this is an $S^1$-bundle. Intuitively, from a physical perspective, this means that one resolves a double collision of points by recording the collision point and the direction in which they have collided. Other parts of the codimension $1$ stratum correspond to simple $k$-tuple collisions, and correspond to $S^{2k-3}$-bundles normal to $k$-diagonals. Higher strata correspond to more complicated collisions modeled by suitable graphs. It will, however, be technically most convenient for us to use a model of this compactification recently constructed directly as a subspace of a Euclidean space by Sinha \cite{Sinha-comp}. We describe this construction in Section \ref{sec: proofs} below. 





Finally, we observe that lower bounds on the average word length can often be provided by quasimorphisms - functions that are additive with respect to the group multiplication - up to an error which is uniformly bounded (as a function of two variables). The quasimorphisms we consider here were introduced and studied by Gambaudo and Ghys in the beautiful paper \cite{GambaudoGhysCommutators} (see also \cite{2015-IJM, PolterovichDynamicsGroups, PyTorus, PySurfacesQm, BrandenburskyKnots, BrandShelS2, BrandShelAut}). These quasimorphisms essentially appear from invariants of braids traced out by the action of a Hamiltonian path on an ordered $n$-tuple of distinct points in the surface (suitably closed up), averaged over the configuration space $X_n(M)$ of $n$-tuples of distinct points on the surface $M$. As one of our results, we prove that all homogeneous Gambaudo-Ghys quasi-morphisms are Lipschitz in the $L^p$-metric for all $p\geq 1.$ This subsumes all previous results in this direction and provides a maximally general result. It also contrasts a recent result of Khanevsky according to which none of these quasi-morphisms are continuous in Hofer's metric \cite{K-cont}.

We prove further stronger results on the large-scale geometry on the $L^p$-metric on $\cG=\Diff_{c,0}(M,\sigma)$ for a compact surface $(M,\sigma)$ with an area form. In particular, we provide bi-Lipschitz group monomorphisms of $\R^m$ endowed with the standard (say Euclidean) metric into $(\cG,d_{L^p})$ for each positive integer $m$ and each $p \geq 1.$ Finally, our methods combined with an argument of Kim-Koberda \cite{KimKoberdaAntiTrees} (cf. Crisp-Wiest \cite{CrispWiest}, Benaim-Gambaudo \cite{BenaimGambaudo}) show the existence of quasi-isometric group monomorphisms from each right-angled Artin group to $(\cG,d_{L^p})$ for each $p \geq 1.$ We note that this was previously known only for $\mathbb{D}^2$ and $S^2$ \cite{KimKoberdaAntiTrees,BrandShelS2}. 

Let $M_1 \hookrightarrow M_2$ be a measure preserving embedding of surfaces. 
It is an open question if the induced Lipschitz monomorphism $\Diff_0(M_1,\sigma_1) \hookrightarrow \Diff_0(M_2,\sigma_2)$ is a quasi-isometric embedding. Note that our results provide a partial positive answer to this problem, see Remark \ref{rmk: question embedding}.

\subsection*{Acknowledgments.}
MB~was partially supported by Humboldt research fellowship.
MM~was partially supported by NCN (Sonatina 2018/28/C/ST1/00542) and by the GA\v{C}R project 19-05271Y and by RVO: 67985840.
ES~was partially supported by an NSERC Discovery Grant, by the Fondation Courtois, and by a Sloan Research Fellowship.

\subsection{Preliminaries}

\subsubsection{The $L^p$-metric}

Let $M$ denote a smooth oriented manifold without boundary that is either closed, or $M = X \setminus \del X$ for a compact manifold $X.$ Let $M$ be endowed with a Riemannian metric $g$ and smooth measure $\mu$ (given by an orientation on $M$ and volume form, which in our case of a surface $M$ is an area form $\sigma$). We require that $g$ and $\mu$ extend continuously to $X$ in the second case. Finally denote by \[\G=\Diff_{c,0}(M,\mu)\] the identity component of the group of compactly supported diffeomorphisms of $M$ preserving the smooth measure $\mu.$ In other words, if $M = X \setminus \del X$, it is the identity component of the group of measure preserving diffeomorphisms of $X$ fixing point-wise a neighbourhood of $\del X$. 

Fix $p\geq 1.$ For a smooth isotopy $\{\phi_t\}_{t \in [0,1]},$ from $\phi_0 = \mathrm{1}$ to $\phi_1 = \phi,$  we define the $L^p$-length by \[{l}_p(\{\phi_t\}) = \int_0^1  \left(\frac{1}{\vol(M,\mu)} \cdot \int_M |X_t|^p d\mu\right)^{\frac{1}{p}} \, dt \,,\] where $X_t = \frac{d}{dt'}|_{t'=t} \phi_{t'} \circ \phi_t^{-1}$ is the time-dependent vector field generating the isotopy $\{\phi_t\}$, and $|X_t|$ is its length with respect to the Riemannian structure on $M$. As is easily seen by a displacement argument, the $L^p$-length functional determines a non-degenerate norm on $\G$ by the formula \[d_p(\id,\phi) = \inf \; l_p(\{\phi_t \}).\] This in turn defines a right-invariant metric on $\G$ by the formula \[d_p(\phi_0,\phi_1) = d_p(\id,\phi_1 {\phi_0}^{-1}).\]

\begin{rmk}
Consider the case $p = 1.$ It is easy to see that the $L^1$-length of an isotopy is equal to the average Riemannian length of the trajectory $\{\phi_t (x)\}_{t \in [0,1]}$ (over $x \in M,$ with respect to $\mu$). Moreover for each $p \geq 1,$ by Jensen's (or H\"{o}lder's) inequality, we have \[l_p(\{\phi_t\}) \geq l_1(\{\phi_t\}).\]
\end{rmk}

Denote by $\til{\id}$ the identity element of the universal cover $\til{\G}$ of $\G.$ Similarly one has the $L^p$-pseudo-norm (that induces the right-invariant $L^p$-pseudo-metric) on $\til{\G}$, defined for $\til{\phi} \in \til{\G}$ as
\[d_p(\til{\id},\til{\phi}) = \inf \; l_p(\{\phi_t\}),\] where the infimum is taken over all paths $\{\phi_t\}$ in the class of $\til{\phi}.$ Clearly $d_p(\id,\phi) = \inf d_p(\til{\id},\til{\phi}),$ where the infimum runs over all $\til{\phi} \in \til{\G}$ that map to $\phi$ under the natural epimorphism $\til{\G} \to \G.$

Up to bi-Lipschitz equivalence of metrics ($d$ and $d'$ are equivalent if $\frac{1}{C}d \leq d' \leq C d$ for a certain constant $C>0$) the $L^p$-metric on $\G$ (and its pseudo-metric analogue on $\til{\G}$) is independent of the choice Riemannian structure and of the volume form $\mu$ on $M.$ In particular, the question of boundedness or unboundedness of the $L^p$-metric enjoys the same invariance property.

{\bf Terminology:} For a positive integer $n,$ we use $A,B,C > 0$ as generic notation for positive constants that depend only on $M,\mu,g$ and $n.$

\subsubsection{Quasimorphisms}

Some of our results have to do with the notion of a quasimorphism. Quasimorphisms are a helpful tool for the study of non-abelian groups, especially those that admit few homomorphisms to $\R.$ A quasimorphism $r: G \to \R$ on a group $G$ is a real-valued function that satisfies
\[r(xy) = r(x) + r(y) + b_r(x,y),\]
for a function $b_r:G\times G \to \R$ that is uniformly bounded:
\[\delta(r): = \sup_{G\times G} |b_r| < \infty.\]

A quasimorphism $\overline{r}:G \to \R$ is called \textit{homogeneous} if $\overline{r}(x^k) = k \overline{r}(x)$ for all $x\in G$ and $k \in \Z$. In this case, it is additive on each pair $x,y \in G$ of commuting elements: $r(xy) = r(x) + r(y)$ if $xy = yx.$

For each quasimorphism $r:G\to \R$ there exists a unique homogeneous quasimorphism $\overline{r}$ that differs from $r$ by a bounded function: \[\sup_G |\overline{r} - r| < \infty.\]
It is called the \textit{homogenization} of $r$ and satisfies
\[\overline{r}(x) = \lim_{n \to \infty} \frac{r(x^n)}{n}.\]

Denote by $Q(G)$ the real vector space of homogeneous quasimorphisms on $G.$

For a finitely-generated group $G,$ with finite symmetric generating set $S,$ define the word norm $|\cdot |_S:G \to \Z_{\geq 0}$ by \[|g|_S = \min \{k\,|\, g = s_1 \cdot \ldots \cdot s_k,\,  \forall\, 1 \leq j \leq k, \, s_j \in S\}\] for $g \in G.$ This is a norm on $G,$ and as such it induces a right-invariant metric $d_S: G \times G \to \Z_{\geq 0}$ by $d_S(f,g) = |gf^{-1}|_S.$ This metric is called the word metric. In this setting, any quasimorphism $r :G \to \R$ is controlled by the word norm. Indeed, for all $g \in G,$ \[|r(g)| \leq \left(\delta(r) + \max_{s \in S} |r(s)|\right) \cdot |g|_S.\] When a specific symmetric generating set $S$ for $G$ can be fixed, we will usually denote $|\cdot |_S$ by $|\cdot |_G$. 

We refer to \cite{CalegariScl} for more information about quasimorphisms.

\subsubsection{Configuration spaces and braid groups}
For a manifold $M,$ which in this paper is usually of dimension $2,$ the configuration space $X_n(M) \subset M^n$ of $n$-tuples of points on $M$ is defined as \[X_n(M) = \{(x_1,\ldots,x_n)|\; \displaystyle{x_i \neq x_j,}\;  1 \leq i < j \leq n\}.\] That is \[X_n(M) = M^n \setminus \bigcup_{1 \leq i < j \leq n} D_{ij}\] where for $1 \leq i < j \leq n,$ the partial diagonal $D_{ij} \subset M^n$ is defined as $D_{ij} = \{(x_1,\ldots,x_n)|\; x_i = x_j\}.$ Note that $D_{ij}$ is a submanifold of $M^n$ of codimension $\dim M.$

Finally, if $\dim M = 2$, we define the pure braid group of $M$ as \[P_n(M) = \pi_1(X_n(M)).\] Noting that the symmetric group $S_n$ on $n$ elements acts freely on $X_n(M),$ we form the quotient $C_n(M) = X_n(M)/S_n$ and define the full braid group of $M$ as \[B_n(M) = \pi_1(C_n(M)).\] We note that $P_n(M)$ and $B_n(M)$ enter the exact sequence $1 \to P_n(M) \to B_n(M) \to S_n \to 1.$ In particular $P_n(M)$ is a normal subgroup of $B_n(M)$ of finite index. We refer to \cite{TuraevKassel} for further information about braid groups.

\subsubsection{Short paths and the Gambaudo-Ghys construction}

Let $M$ be a compact oriented surface. Given a real valued quasimorphism $r$ on $P_n(M) = \pi_1(X_n(M),q)$ for a fixed basepoint $q \in X_n(M)$ there is a natural way to construct a real valued quasimorphism on the universal cover $\widetilde{\G}$ of the group $\G=\Diff_{c,0}(M,\sigma)$ of area preserving diffeomorphisms of the surface $M$. We shall see that in the case of $M \neq T^2$ this induces a quasimorphism on $\G$ itself, because the fundamental group of $\G$ is finite. The same is true for $M = T^2$ where we consider the group $\G = \Ham(M,\sigma)$ of Hamiltonian diffeomorphisms instead. This is not a restrictive condition from the viewpoint of large-scale geometry, since by a small modification of \cite[Proposition A.1]{BrandShelS2}, the inclusion $(\Ham(T^2,\sigma),d_{L^p}) \hookrightarrow (\Diff_0(T^2,\sigma),d_{L^p})$ is a quasi-isometry for all $p\geq 1.$

The construction is carried out by the following steps (cf. \cite{GambaudoGhysCommutators,PolterovichDynamicsGroups,BrandenburskyKnots}).

\begin{enumerate}
\item For all $x \in X_n(M) \setminus Z$, with $Z$ a closed negligible subset (e.g. a union of submanifolds of positive codimension) choose a smooth path $\gamma(x):[0,1] \to X_n(M)$ between the basepoint $q\in X_n(M)$ and $x$. Make this choice continuous in $ X_n(M) \setminus Z$. We first choose a system of paths on $M$ itself. Then we consider the induced coordinate-wise paths in $M^n$, and pick $Z$ to ensure that these induced paths actually lie in $X_n(M)$. After choosing the system of paths $\{\gamma(x)\}_{x \in X_n(M)\setminus Z}$ we extend it measurably to all $x \in X_n(M)$ (obviously, no numerical values computed in the paper will depend on this extension). We call the resulting choice a "system of short paths". 

\item Given a path $\{\phi_t \}_{t \in [0,1]}$ in $\G$ starting at $Id$, and a point $x \in X_n(M)$ consider the path $\{\phi_t \cdot x\}$, to which we then catenate the corresponding short paths. That is consider the loop \[\lambda(x,\{\phi_t\}) := \gamma(x) \# \{\phi_t \cdot x\} \# \gamma(y)^{-1}\] in $X_n(M)$ based at $q,$ where $^{-1}$ denotes time reversal. Hence we obtain for each $x \in X_n(M)\setminus Z \cup (\phi_1)^{-1}(Z)$ an element $[\lambda(x,\{\phi_t\})] \in \pi_1(X_n(M),q)$ (or rather for each $x \in X_n(M)$ after the measurable extension in Step 1).

\item Consequently applying the quasimorphism $r:\pi_1(X_n(M),q) \to \R$ we obtain a measurable function $f:X_n(M) \to \R$. 	   Namely $f(x) = r([\lambda(x,\{\phi_t\})])$. The quasimorphism $\Phi$ on $\widetilde{\G}$ is defined by    \[\Phi([\{\phi_t\}]) = \int_{X_n(M)} f \,d \mu^{\otimes n}.\]
     It is immediate to see that this function is well-defined by topological reasons. The quasimorphism property follows by the quasimorphism property of $r$ combined with finiteness of volume. The fact that the function $f$ is absolutely integrable can be shown to hold a-priori by a reduction to the case of the disc. We note, however, that by Tonelli's theorem this fact follows as a by-product of the proof of our main theorem, and therefore requires no additional proof.
\item Of course this quasimorphism can be homogenized, to obtain a homogeneous quasimorphism $\overline{\Phi}$.
\end{enumerate}
\vspace{2mm}
\begin{rmk}
If $M \neq S^2$ and $M \neq T^2$, then $\pi_1(\G) = 0$ and for $M=T^2$ the same is true for $\G = \Ham(M,\sigma).$
In the case $M=S^2,$ by the result of Smale \cite{Smale} $\pi_1(\G) = \Z/2\Z$.
Hence the quasimorphisms descend to quasimorphisms on $\G$, e.g. by minimizing over the two-element fibers of the projection $\widetilde{\G} \to \G$. For $\overline{\Phi}$, the situation is easier since by homogeneity it vanishes on
$\pi_1 (\G) \subset Z(\til{\G})$, and therefore depends only on the image in $\G$ of an element in $\til{\G}$. We keep the same notations for the induced quasimorphisms.
\end{rmk}

\subsection{Main results}

Recall that we work with the group $\cl G = \Diff_{c,0}(M,\sigma)$ for $(M,\sigma)$ a compact oriented surface with an area form. Our main technical result is the following. 

\medskip

\begin{thm}\label{Theorem: average wordlength bound}
	For an isotopy $\overline{\phi} = \{\phi_t\}$ in $\G,$ the average word norm of a trajectory $\lambda(x,\overline{\phi})$ is controlled by the $L^1$-length of $\overline{\phi}:$
	
	\[W(\overline{\phi}) = \int_{X_n(M)\setminus Z \cup (\phi_1)^{-1}(Z)} |[\lambda(x,\overline{\phi})]|_{P_n(M)} \, d\mu^{\otimes n}(x) \leq A \cdot l_1(\overline{\phi}) +B,\] for certain constants $A,B > 0.$
\end{thm}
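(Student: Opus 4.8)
The plan is to bound the word norm $|[\lambda(x,\overline\phi)]|_{P_n(M)}$ pointwise in $x$ by the length (in a suitable fixed metric) of the loop $\lambda(x,\overline\phi)$, and then integrate over $x$. The loop decomposes as $\gamma(x) \# \{\phi_t\cdot x\} \# \gamma(\phi_1 x)^{-1}$; since the short paths $\gamma$ are chosen from a fixed continuous system on a compact set, their lengths are uniformly bounded by a constant depending only on $M,\mu,g,n$, so the only term whose length varies is the trajectory $\{\phi_t \cdot x\}_{t\in[0,1]}$. Its length in the product metric on $M^n$ is $\int_0^1 \left(\sum_{i=1}^n |X_t(\phi_t x_i)|^2\right)^{1/2}\,dt$, and the $L^1$-control of $\int_{M^n}$ of this quantity against $l_1(\overline\phi)$ is exactly the kind of estimate obtained by averaging a single-point bound — this is the Schwarz--Milnor input applied to one strand at a time, then summed. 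The heart of the matter, however, is that $X_n(M)$ with the metric induced from $M^n$ is \emph{not} a proper (or even complete) metric space, so one cannot directly invoke the Schwarz--Milnor lemma on its universal cover; this is precisely the obstacle the paper resolves via the Axelrod--Singer--Kontsevich--Sinha compactification $\ol{X}_n(M)$.

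Concretely, I would proceed as follows. First, fix the Sinha model of the compactification $\ol{X}_n(M)$ as a compact subspace of Euclidean space, equipped with a geodesic metric $\rho$; recall from the introduction that the inclusion $X_n(M)\hookrightarrow \ol{X}_n(M)$ induces an isomorphism on $\pi_1$, and that $\Diff(M)$ acts on $\ol{X}_n(M)$ extending its action on $X_n(M)$. Second, apply the Schwarz--Milnor lemma to the compact geodesic space $\ol{X}_n(M)$: its universal cover is quasi-isometric to $P_n(M)=\pi_1(\ol{X}_n(M))$ with the word metric, so there are constants $A',B'>0$ with $|[\ell]|_{P_n(M)} \leq A'\cdot \mathrm{length}_{\rho}(\ell) + B'$ for every loop $\ell$ based at $q$ (after lifting, using that the orbit of a lift of $q$ is $B'$-dense). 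Third, the system of short paths is continuous on the compact set $X_n(M)\setminus Z$, hence has $\rho$-length bounded by a universal constant, so it suffices to bound $\int_{X_n(M)} \mathrm{length}_\rho(\{\phi_t\cdot x\})\,d\mu^{\otimes n}(x)$ by $A\cdot l_1(\overline\phi)+B$.

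Fourth — and this is where the comparison with \cite{MichalM-diameter} enters — one must compare the $\rho$-length of the trajectory $\{\phi_t\cdot x\}$ inside $\ol X_n(M)$ with the length of the corresponding trajectory $\{\phi_t x_i\}$ of each individual strand in $(M,g)$. Since $\ol X_n(M)$ is a blow-up of $M^n$ along the diagonals and the blow-down map $\ol X_n(M)\to M^n$ is Lipschitz near the interior but the blown-up directions (collision directions on the $S^1$- and $S^{2k-3}$-bundles over the diagonals) contribute extra length, the bound $\mathrm{length}_\rho(\{\phi_t\cdot x\}) \leq C\cdot \sum_{i=1}^n \mathrm{length}_g(\{\phi_t x_i\})$ is \emph{not} automatic and is really the technical crux: one needs that a diffeomorphism-induced path stays "transverse enough" to the collision loci, or rather that the extra stretching in the normal directions is controlled by the derivatives of $\phi_t$, which in turn are controlled because $\phi_t$ is area-preserving and $X_n(M)$ sits at bounded $\rho$-distance from the boundary along such paths. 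I expect this comparison — reconciling the compactified metric $\rho$ with the complete metric of \cite{MichalM-diameter} and with the naive sum-of-strand-lengths — to be the main obstacle; granting it, the final step is routine: integrate over $x\in X_n(M)$, use $\int_{X_n(M)} \sum_i \mathrm{length}_g(\{\phi_t x_i\})\,d\mu^{\otimes n} = n\cdot \mathrm{vol}(M,\mu)^{n-1}\cdot \int_M \mathrm{length}_g(\{\phi_t x\})\,d\mu(x) = n\cdot\mathrm{vol}(M,\mu)^{n}\cdot l_1(\overline\phi)$ by Fubini/Tonelli, absorb constants, and conclude $W(\overline\phi)\leq A\cdot l_1(\overline\phi)+B$.
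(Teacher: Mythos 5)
Your first three steps (the Sinha compactification, Schwarz--Milnor applied to $\ol{X}_n(M)$, and the uniform bound on the short paths) match the paper's Proposition \ref{Prop: top} and the beginning of its Proposition \ref{prop: analysis}. The gap is in your fourth step. The pointwise inequality you propose to ``grant'', namely $\mathrm{length}_\rho(\{\phi_t\cdot x\}) \leq C\sum_{i}\mathrm{length}_g(\{\phi_t x_i\})$, is false, and no appeal to transversality or area-preservation can rescue it: take two points at mutual distance $\eps$ and an area-preserving flow supported in a small disc that rotates them once about their midpoint. Each strand has length of order $\eps$, so the right-hand side tends to $0$ with $\eps$, but the direction map $\pi_{12}$ traces a full great circle in the $S^{d-1}$ factor of the Sinha embedding, so the left-hand side is at least $2\pi$. (Your heuristic that the normal stretching is ``controlled by the derivatives of $\phi_t$'' also cannot be quantified: area-preservation controls $\det D\phi_t$, not $\|D\phi_t\|$, and neither is controlled by $l_1(\overline{\phi})$.) Consequently your concluding Fubini computation, which integrates the unweighted sum of strand lengths, bounds the wrong quantity.

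What the paper actually proves at this point is a pointwise bound with a singular weight: $|v|_g \leq C\,|v|/d(x)$, where $d(x)=\min_{i\neq j}|x_i-x_j|$ (Lemma \ref{lma: two metrics}); the factor $1/d(x)$ is exactly what absorbs the rotation example above. The price is that one must then establish the \emph{averaged} estimate $\int_{X_n(M)} l_{g_0}(\{\phi_t\cdot x\})\,d\mu^{\otimes n}(x)\leq C'\,l_1(\overline{\phi})$ for the weighted metric $|v|_{g_0}=|v|/d(x)$, and this is irreducibly a measure-theoretic statement rather than a pointwise one: it is \cite[Lemma 5.2]{MichalM-diameter}, which uses the change of variables supplied by area-preservation at each time $t$ together with the integrability of $1/d$ over $X_n(M)$ (the diagonals have codimension $2$). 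This weighted, integrated comparison is the missing ingredient; without it the chain from $l_g(\lambda(x,\overline{\phi}))$ to $l_1(\overline{\phi})$ does not close.
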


\medskip

\begin{rmk}
	Note that $W(\overline{\phi})$ depends only on the class $\til{\phi} = [\overline{\phi}] \in \til{\G}$ of $\overline{\phi}$ in the universal cover $\til{\G}$ of $\G$.
	Hence for all compact oriented surfaces other than $S^2$ and $T^2,$ $W(\overline{\phi})$ does not depend on the chosen isotopy $\overline{\phi}$, but only on the diffeomorphism $\phi_1$. This is because $\G$ is simply connected.
\end{rmk}

Theorem \ref{Theorem: average wordlength bound} has a number of consequences concerning the large-scale geometry of the $L^1$-metric on $\G.$ 
Firstly, as any quasimorphism on a finitely generated group is controlled by the word norm, we immediately obtain the following statement.

\medskip

\begin{cor}\label{Corollary: GG Lip}
	The homogenization $\overline{\Phi}$ of each Gambaudo-Ghys quasimorphism $\Phi$ satisfies $$|\overline{\Phi}(\phi)| \leq C \cdot d_1(\phi,1).$$
\end{cor}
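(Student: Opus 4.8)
The plan is to deduce Corollary~\ref{Corollary: GG Lip} directly from Theorem~\ref{Theorem: average wordlength bound} together with the elementary inequality bounding a quasimorphism by the word norm. Recall from the Preliminaries that for a finitely generated group $G$ with finite symmetric generating set $S$, any quasimorphism $r:G\to\R$ satisfies $|r(g)|\le \left(\delta(r)+\max_{s\in S}|r(s)|\right)\cdot|g|_S$ for all $g\in G$. We apply this with $G=P_n(M)$ and $r$ the quasimorphism on $P_n(M)$ used to build the Gambaudo-Ghys quasimorphism $\Phi$. Thus there is a constant $C_r>0$ (depending only on $r$ and the fixed generating set of $P_n(M)$) with $|r(\beta)|\le C_r\cdot|\beta|_{P_n(M)}$ for every $\beta\in P_n(M)$. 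Here we use that $P_n(M)$ is finitely generated, which holds since $M$ is a compact surface.

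Next I would unwind the definitions. For an isotopy $\overline{\phi}=\{\phi_t\}$ representing an element $\til\phi\in\til\G$, the (non-homogenized) Gambaudo-Ghys quasimorphism is $\Phi(\til\phi)=\int_{X_n(M)} f\,d\mu^{\otimes n}$ where $f(x)=r([\lambda(x,\overline{\phi})])$ is defined for $x$ outside the negligible set $Z\cup(\phi_1)^{-1}(Z)$. Applying the pointwise bound on $r$ and then integrating,
\[
|\Phi(\til\phi)|\le\int_{X_n(M)}|f|\,d\mu^{\otimes n}\le C_r\int_{X_n(M)\setminus Z\cup(\phi_1)^{-1}(Z)}|[\lambda(x,\overline{\phi})]|_{P_n(M)}\,d\mu^{\otimes n}(x)=C_r\cdot W(\overline{\phi}).
\]
Theorem~\ref{Theorem: average wordlength bound} now gives $|\Phi(\til\phi)|\le C_r(A\cdot l_1(\overline{\phi})+B)$. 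Taking the infimum over all isotopies $\overline{\phi}$ in the class $\til\phi$ yields $|\Phi(\til\phi)|\le C_r(A\cdot d_1(\til{\id},\til\phi)+B)$; in particular, restricting to a one-parameter setup, $\Phi$ has at most linear growth with a bounded error.

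To pass to the homogenization $\overline{\Phi}$ I would use $\overline{\Phi}(\til\phi)=\lim_{k\to\infty}\Phi(\til\phi^{\,k})/k$ together with the fact that $d_1$ is a (pseudo-)norm, hence $d_1(\til{\id},\til\phi^{\,k})\le k\cdot d_1(\til{\id},\til\phi)$ by subadditivity under composition of isotopies. Then $|\Phi(\til\phi^{\,k})|\le C_r(A\cdot k\,d_1(\til{\id},\til\phi)+B)$, so dividing by $k$ and letting $k\to\infty$ kills the constant $B$ and gives $|\overline{\Phi}(\til\phi)|\le C_rA\cdot d_1(\til{\id},\til\phi)$. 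Finally, as noted in the remarks following the construction, $\overline{\Phi}$ descends to $\G$ (it vanishes on $\pi_1(\G)\subset Z(\til\G)$ by homogeneity), and $d_1(\til{\id},\til\phi)$ projects to $d_1(1,\phi)$ by taking the infimum over lifts; this produces $|\overline{\Phi}(\phi)|\le C\cdot d_1(\phi,1)$ with $C=C_rA$, as claimed.

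I do not anticipate a genuine obstacle here: the corollary is a formal consequence of the main theorem. The only points requiring a modicum of care are (i) confirming that the constant $C_r$ from the word-norm bound on $r$ is finite, which needs $P_n(M)$ finitely generated and the generating set fixed once and for all — standard for surface braid groups; (ii) the measurability and integrability of $f$, which is addressed in the construction (and, as remarked there, also follows a posteriori from the proof of the main theorem via Tonelli); and (iii) the bookkeeping to see that the negligible exceptional set does not affect the integral, which is immediate since $Z\cup(\phi_1)^{-1}(Z)$ has measure zero. The homogenization step is routine once subadditivity of $d_1$ is invoked.
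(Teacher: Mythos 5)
Your proof is correct and follows exactly the route the paper intends: the paper derives Corollary~\ref{Corollary: GG Lip} in one line from Theorem~\ref{Theorem: average wordlength bound} together with the fact that a quasimorphism on a finitely generated group is controlled by the word norm, and your write-up is a careful elaboration of that same argument (including the homogenization step that absorbs the additive constant $B$ and the descent from $\til{\G}$ to $\G$).
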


In particular, Corollary \ref{Corollary: GG Lip} implies that all Gambaudo-Ghys quasimorphisms are continuous in the $L^1$-metric
(and hence in the $L^p$-metric, see Remark \ref{Remark: Jensen 1 to p}), a fact which was known so far only for the genus zero case (see \cite{BrandShelS2, BrandenburskyLpMetrics}) and for the higher genus case only when one considers 
Gambaudo-Ghys quasimorphisms coming from the fundamental group $P_1(M)$, see \cite{BrandenburskyLpMetrics}. Note that none of these quasimorphisms are continuous in the Hofer metric by a recent result by Khanevsky \cite{K-cont}.

By a theorem of Ishida \cite{Ishida} in the genus zero case and its generalisation to any compact oriented surface 
\cite[Theorem 2.2]{2019-BM1} (see all well Brandenbursky-Kedra-Shelukhin \cite{2018-CCM} in the genus one case, 
and Brandenbursky \cite{2015-IJM} in the higher genus case),
the image of the map $Q(P_n(M)) \xrightarrow{GG} Q(\G)$ is infinite dimensional for $n \geq 4$. 
Thus $Q(\G)$ is an infinite-dimensional vector space. Hence by Corollary \ref{Corollary: GG Lip} we obtain in particular the following.

\medskip

\begin{cor}\label{Corollary: diam}
	The $L^1$-diameter of $\G$ is infinite.
\end{cor}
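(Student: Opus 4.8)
\textbf{Proof proposal for Corollary \ref{Corollary: diam}.}

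The plan is to deduce the infiniteness of the $L^1$-diameter of $\G$ from the combination of Corollary \ref{Corollary: GG Lip} and the cited dimension count for the image of the Gambaudo-Ghys map. First I would fix $n \geq 4$ and invoke the theorem of Ishida (genus zero) together with its generalization \cite[Theorem 2.2]{2019-BM1} to conclude that the image of $Q(P_n(M)) \xrightarrow{GG} Q(\G)$ is infinite-dimensional; in particular $Q(\G) \neq 0$, so there exists a nonzero homogeneous quasimorphism $\overline{\Phi}$ on $\G$ arising as the homogenization of a Gambaudo-Ghys quasimorphism. Since $\overline{\Phi} \not\equiv 0$, pick $\psi \in \G$ with $\overline{\Phi}(\psi) \neq 0$.

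Next I would use homogeneity to produce elements of arbitrarily large $L^1$-norm. For each positive integer $k$ we have $\overline{\Phi}(\psi^k) = k\,\overline{\Phi}(\psi)$, so $|\overline{\Phi}(\psi^k)| = k\,|\overline{\Phi}(\psi)| \to \infty$ as $k \to \infty$. On the other hand, Corollary \ref{Corollary: GG Lip} gives a constant $C > 0$ (depending on $\overline{\Phi}$) with $|\overline{\Phi}(\phi)| \leq C \cdot d_1(\phi, 1)$ for all $\phi \in \G$. Applying this with $\phi = \psi^k$ yields
\[
d_1(\psi^k, 1) \;\geq\; \frac{1}{C}\,|\overline{\Phi}(\psi^k)| \;=\; \frac{k}{C}\,|\overline{\Phi}(\psi)|.
\]
Letting $k \to \infty$, the right-hand side is unbounded, hence $\sup_{\phi \in \G} d_1(1,\phi) = \infty$, i.e. the $L^1$-diameter of $\G$ is infinite. $\qed$

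I do not expect any genuine obstacle here: the corollary is a formal consequence of the existence of a single unbounded homogeneous quasimorphism that is Lipschitz in $d_1$, and both ingredients — Lipschitz-ness (Corollary \ref{Corollary: GG Lip}) and non-triviality of $Q(\G)$ (the cited rigidity/dimension results) — are already in hand. The only point requiring mild care is checking that the quasimorphism supplied by Ishida's theorem and its generalizations is indeed of Gambaudo-Ghys type, so that Corollary \ref{Corollary: GG Lip} applies to it; this is exactly the content of the cited references, where the image of the $GG$ map is the object shown to be infinite-dimensional. (One could alternatively phrase the conclusion slightly more strongly: $(\G, d_1)$ admits a quasi-isometric embedding of $\Z$, and in fact $Q(\G)$ being infinite-dimensional shows $\G$ is ``large'' in the sense of stable commutator length as well, but for the stated corollary the one-parameter argument above suffices.)
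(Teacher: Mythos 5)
Your argument is correct and is exactly the route the paper takes: it combines the non-triviality (indeed infinite-dimensionality) of the image of the Gambaudo-Ghys map from the cited results with the Lipschitz bound of Corollary \ref{Corollary: GG Lip}, and your homogeneity computation $d_1(\psi^k,1)\geq \tfrac{k}{C}|\overline{\Phi}(\psi)|$ just spells out the step the paper leaves implicit.
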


\medskip

Let $Ent^k$ be the set of products of at most $k$ entropy zero diffeomorphisms.
Theorem 1 in \cite{2019-BM1} and Corollary \ref{Corollary: GG Lip} imply the following. 

\medskip

\begin{cor}
	For each positive integer $k$, the complement in $\G$ of the set $Ent^k$ 
	contains a ball of any arbitrarily large radius in the $L^1$-metric. In particular,
	the set of non-autonomous diffeomorphisms contains a ball of any arbitrarily large radius in the $L^1$-metric.
\end{cor}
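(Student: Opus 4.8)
The plan is to deduce the statement formally from Corollary \ref{Corollary: GG Lip} and Theorem 1 of \cite{2019-BM1}. By the latter there is a \emph{nonzero} homogeneous Gambaudo-Ghys quasimorphism $\mu=\overline{\Phi}\colon\G\to\R$ that vanishes identically on the set $Ent^1$ of area-preserving diffeomorphisms of zero topological entropy; write $D=\delta(\mu)$ for its defect. Two elementary observations drive the argument. First, $\mu$ is uniformly bounded on $Ent^k$: if $\phi=g_1\cdots g_k$ with every $g_i\in Ent^1$, then applying the quasimorphism inequality $k-1$ times and using $\mu(g_i)=0$ gives $|\mu(\phi)|\leq (k-1)D$. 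Second, $\mu$ is coarsely Lipschitz for $d_1$: by right-invariance of $d_1$, the defining inequality of a quasimorphism, and Corollary \ref{Corollary: GG Lip},
\[ |\mu(\phi)-\mu(\psi)| \;\leq\; |\mu(\phi\psi^{-1})| + D \;\leq\; C\,d_1(\phi,\psi) + D \qquad (\phi,\psi\in\G). \]

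Given $R>0$ and $k\geq 1$, I would build the required ball as follows. Since $\mu$ is a nonzero homogeneous quasimorphism, pick $\psi_0\in\G$ with $\mu(\psi_0)>0$; then $\mu(\psi_0^m)=m\,\mu(\psi_0)$ grows without bound, so fix a positive integer $m$ with $m\,\mu(\psi_0) > CR + kD$ and set $\phi_0=\psi_0^m$. For every $\phi\in\G$ with $d_1(\phi,\phi_0)\leq R$ the coarse-Lipschitz bound yields
\[ \mu(\phi) \;\geq\; \mu(\phi_0) - C\,d_1(\phi,\phi_0) - D \;\geq\; m\,\mu(\psi_0) - CR - D \;>\; (k-1)D, \]
so by the first observation $\phi\notin Ent^k$. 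Hence the $d_1$-ball of radius $R$ about $\phi_0$ misses $Ent^k$ entirely, which proves the first assertion.

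For the "in particular" clause, recall that an autonomous diffeomorphism --- the time-one map of the flow of a time-independent area-preserving vector field on a compact surface --- has zero topological entropy, so the set of autonomous elements of $\G$ is contained in $Ent^1$, hence in $Ent^k$. Applying the preceding paragraph with $k=1$, the complement of $Ent^1$ contains $d_1$-balls of arbitrarily large radius, and this complement lies inside the set of non-autonomous diffeomorphisms; this gives the claim.

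I expect no genuine obstacle here: the corollary is a routine packaging of the two cited results. The only point requiring care is to invoke a single quasimorphism that is simultaneously of Gambaudo-Ghys type (so that Corollary \ref{Corollary: GG Lip} applies), not identically zero on $\G$ (so that homogeneity forces it to be unbounded), and identically zero on $Ent^1$; the existence of such quasimorphisms, in fact of an infinite-dimensional space of them, is precisely what Theorem 1 of \cite{2019-BM1} supplies. Everything after that is the standard device by which an unbounded coarsely Lipschitz function that is bounded on a set $S$ pushes balls of any prescribed radius into the complement of $S$, with $D$ merely bookkeeping the failure of exact additivity in the two displayed estimates.
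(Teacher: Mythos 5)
Your proposal is correct and is exactly the deduction the paper intends: the paper gives no written proof, merely asserting that Theorem 1 of \cite{2019-BM1} (supplying a nonzero homogeneous Gambaudo-Ghys quasimorphism vanishing on entropy-zero diffeomorphisms) together with Corollary \ref{Corollary: GG Lip} implies the statement, and your argument is the standard way to package those two facts. All the steps check out, including the bound $|\mu(\phi)|\leq (k-1)D$ on $Ent^k$, the coarse-Lipschitz estimate, and the use of Young's theorem that autonomous flows on compact surfaces have zero topological entropy for the ``in particular'' clause.
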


\medskip

In what follows, we apply an argument of Kim-Koberda \cite{KimKoberdaAntiTrees} (cf. Benaim-Gambaudo \cite{BenaimGambaudo} and Crisp-Wiest \cite{CrispWiest}), 
and use Theorem \ref{Theorem: average wordlength bound} in order to obtain the following statement, which generalizes an answer 
to a question of Kapovich \cite{KapovichRAAGs} in the case of $S^2$ \cite{BrandShelS2} to arbitrary compact oriented surfaces.  

\medskip

\begin{cor}\label{Corollary: Kapovich}
	The metric group $(\G,d_1)$ admits a quasi-isometric group embedding from each right-angled Artin group endowed with the word metric.  
\end{cor}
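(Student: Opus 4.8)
The plan is to realise a given right-angled Artin group $G = A(\Gamma)$ inside $\G$ by area-preserving diffeomorphisms supported near a finite configuration, arranged so that the configuration-space trajectories appearing in Theorem~\ref{Theorem: average wordlength bound} essentially record the word length in $G$; Theorem~\ref{Theorem: average wordlength bound} then supplies the lower bound on distances, the matching upper bound being elementary. First, by the argument of Kim--Koberda~\cite{KimKoberdaAntiTrees} (in the genus-zero case one uses instead the variant in \cite{BrandShelS2}) there are an integer $n$, a disc $D \subset M$ containing the basepoint $q = (q_1,\dots,q_n)$, and an injective homomorphism
\[ \psi \colon G \hookrightarrow P_n(M) = \pi_1(X_n(M),q) \]
which is a quasi-isometric embedding for the word metrics, carrying each standard generator $s_v$ of $G$ (one for each vertex $v$ of $\Gamma$) to a pure braid realised by $n$ points moving inside $D$, and so that the braids attached to adjacent vertices of $\Gamma$ have pairwise disjoint supports in $D$.

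Next, I would fix pairwise disjoint closed discs $B_1,\dots,B_n$ with $q_i \in B_i \subset D$, and for each vertex $v$ realise $\psi(s_v)$ by an area-preserving isotopy $\{\phi^v_t\}_{t\in[0,1]}$, supported inside $D$, which carries the discs $B_i$ along the corresponding braid of discs and returns each $B_i$ to itself, with $\phi^v_1(q_i) = q_i$. Put $\rho(s_v) = \phi^v_1 \in \G$, and for a word $\overline g$ in the generators let $\rho(\overline g)$ denote the isotopy from $\id$ to $\rho(g)$ obtained by concatenating the $\{\phi^v_t\}^{\pm 1}$ according to the letters of $\overline g$. The disjointness of supports for adjacent vertices makes $\rho$ a group homomorphism $\rho \colon G \to \G$ (for $M = T^2$ one takes $\G = \Ham(T^2,\sigma)$, which includes quasi-isometrically into $(\Diff_0(T^2,\sigma),d_1)$ by a small modification of \cite[Proposition A.1]{BrandShelS2}). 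By construction $\rho(g)$ preserves the set $B := B_1 \times \dots \times B_n \subset X_n(M)$, fixes $q$, and $[\lambda(q,\rho(\overline g))] = \psi(g)$; moreover, since the fixed system of short paths restricted to the connected compact set $B$ contributes only finitely many homotopy classes, one gets
\[ \bigl|[\lambda(x,\rho(\overline g))]\bigr|_{P_n(M)} \ \ge\ |\psi(g)|_{P_n(M)} - C_1 \qquad\text{for all } x \in B , \]
with $C_1$ independent of $x$, of $g$, and of the word $\overline g$.

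Now combine the two bounds. Right-invariance of $d_1$ and the triangle inequality give $d_1(\rho(g),1) \le |g|_G \cdot \max_v \max_{\pm} l_1(\{\phi^v_t\}^{\pm 1}) \le L\,|g|_G$. For the lower bound, restrict the integral defining $W$ to $B$ and use the displayed inequality together with the quasi-isometry property of $\psi$:
\[ W(\rho(\overline g)) \ \ge\ \int_B \bigl|[\lambda(x,\rho(\overline g))]\bigr|_{P_n(M)}\,d\mu^{\otimes n} \ \ge\ \mu^{\otimes n}(B)\bigl(|\psi(g)|_{P_n(M)} - C_1\bigr) \ \ge\ c\,|g|_G - C_2 . \]
On the other hand, Theorem~\ref{Theorem: average wordlength bound} applied to an arbitrary isotopy $\overline\phi$ from $\id$ to $\rho(g)$ gives $l_1(\overline\phi) \ge \tfrac1A(W(\overline\phi) - B)$; since for every compact oriented surface other than $S^2$ and $T^2$ (and for $\G = \Ham(T^2,\sigma)$) the quantity $W(\overline\phi)$ depends only on $\rho(g)$, taking the infimum over $\overline\phi$ yields
\[ d_1(\rho(g),1) \ \ge\ \tfrac1A\bigl(W(\rho(\overline g)) - B\bigr) \ \ge\ \tfrac{c}{A}\,|g|_G - C_3 ; \]
for $M = S^2$ the two lifts of $\rho(g)$ to $\til{\G}$ change each $[\lambda(x,\cdot)]$ by a fixed central braid and hence $W$ by at most a constant, so the same bound holds. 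Together with the upper bound and right-invariance of $d_1$ and of the word metric on $G$, this shows that $\rho$ is a quasi-isometric embedding; it is injective because $[\lambda(q,\rho(\overline g))] = \psi(g)$ and $\psi$ is injective (for $M = S^2$ one also uses that $G$ is torsion-free while the relevant central braid has order two). Finally, $d_1 \le d_p$ combined with the analogous finite upper bound for $l_p$ promotes this to a quasi-isometric embedding into $(\G,d_p)$ for every $p \ge 1$; the case $p = 1$ is the stated corollary.

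The main obstacle is the second step: one must exhibit the area-preserving realisations $\{\phi^v_t\}$ with pairwise disjoint supports for commuting generators --- so that $\rho$ is an honest homomorphism whose image is genuinely $A(\Gamma)$ rather than a proper quotient --- and verify that all points of the positive-measure set $B$, not just the centres $q$, are carried along trajectories whose homotopy class agrees with $\psi(g)$ up to a uniformly bounded correction, so that the lower bound on $W$ survives the averaging in Theorem~\ref{Theorem: average wordlength bound}. Importing Kim--Koberda's embedding as a quasi-isometric (undistorted) embedding into the surface braid group $P_n(M)$, as opposed to the braid group of the disc, is the other point requiring care; the rest is formal manipulation of the word metric and of $d_1$.
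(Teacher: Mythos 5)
Your outline is essentially the paper's own argument: embed the RAAG via Kim--Koberda into diffeomorphisms supported in a disc $D$, push forward to the surface braid group by evaluating trajectories of a positive-measure product of small discs, get the lower bound on $d_1$ from Theorem~\ref{Theorem: average wordlength bound} by restricting the integral defining $W$ to that product, and pair it with the elementary Lipschitz upper bound. Two remarks on the points you flag as ``requiring care.'' First, your ``main obstacle'' (realising the generators so that all of $B$, not just $q$, traces the right braid up to bounded error) dissolves in the paper's set-up: Kim--Koberda's diffeomorphisms can be taken to act by the \emph{identity} on each small disc $D_i$, so for every $x$ in $X(D_1,\ldots,D_n)$ one has $[\lambda(x,\overline{\phi})]=H(\phi)$ exactly, with no correction term and no need to transport the discs along the braid. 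Second, the point you leave genuinely open --- that the Kim--Koberda embedding remains quasi-isometric after composing with $P_n\to P_n(\Sigma_g)$ --- is the one real missing ingredient, and it is not formal: the paper proves it as a separate lemma, identifying $B_n$ as a subgroup of $B_n(\Sigma_g)$ via Goldberg/Birman and then invoking Hamenst\"adt's theorem that the composition $B_n\to B_n(\Sigma_g)\to MCG_{g,n}$ with the point-pushing map is a quasi-isometric embedding for $n>4$ (whence the inclusion itself is undistorted, since point-pushing is Lipschitz). This is why the paper insists on $n>4$, a restriction absent from your sketch. Your handling of $S^2$ and $T^2$ is reasonable (the paper defers $S^2$ to \cite{BrandShelS2} and treats surfaces with boundary by extending by the identity into a closed surface), but without the undistortion lemma the lower bound $|\psi(g)|_{P_n(M)}\gtrsim |g|_G$ is unjustified and the chain of inequalities does not close.
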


\begin{proof}
	Assume first that $M=\Sigma_g$ a closed surface of a positive genus. The case of $S^2$ was already done in \cite{BrandShelS2} 
	and the case of surfaces with boundary will be discussed in the end of the proof. Let $n\in\mathbb{N}$. Let $D\subset\Sigma_g$ be a smoothly embedded open disc and 
	$z_1,\ldots,z_n$ disjoint points in $D$. For each $i$, let $D_i\subset D$ be an embedded open disc centered at $z_i$ such that
	each short geodesic between two points in $D_i$ lies in $D_i$, and $D_i\cap D_j=\emptyset$ for each $i\neq j$.
	We denote by $\Diff(D; D_1,\ldots,D_n)<\G$ the group which consists of all diffeomorphisms in $\G$ which are compactly supported in $D$ and 
	act by identity on each $D_i$. This subgroup is equipped with the $L^1$-metric coming from $\G$.
	
	\begin{lma}\label{lma:above}
		Let $n>4$. We identify $P_n$ with the pure mapping class group of the disc $D$ punctured at $z_1,\ldots,z_n$. 
		Then the inclusion $P_n\to P_n(\Sigma_g)$ is a quasi-isometric embedding.
	\end{lma}
	
	\begin{proof}
		Since $P_n$ is quasi-isometric to $B_n$ and $P_n(\Sigma_g)$ is quasi-isometric to $B_n(\Sigma_g)$,
		it is enough to show that the inclusion $B_n\to B_n(\Sigma_g)$ is a quasi-isometric embedding,
		where $B_n$ is identified with the mapping class group of the disc $D$ punctured at $z_1,\ldots,z_n$.
		By results of Goldberg \cite{Goldberg} (c.f. Birman \cite{Birman}) we have that
		$B_n$ is a subgroup of $B_n(\Sigma_g)$. In addition, the composition $$B_n\xrightarrow{i} B_n(\Sigma_g)\xrightarrow{F} MCG_{g,n}$$	is injective, where $i$ is the inclusion, the map $F:B_n(\Sigma_g)\to MCG_{g,n}$ is the point pushing map and 
		$MCG_{g,n}$ is the mapping class group of $\Sigma_g$ punctured at $z_1,\ldots,z_n$. Moreover, by a result of Hamenstadt
		\cite[Theorem 2]{Hamenstadt} the map $F\circ i: B_n\hookrightarrow MCG_{g,n}$ is a quasi-isometric embedding for $n>4$. 
		{Since $F$ is Lipschitz}, it follows follows that $i$ is a quasi-isometric embedding (see \cite[Lemma 2.1]{BrandenburskyKedra2}). 	\end{proof}		
%
	Let us return to the proof of the corollary. Consider the homomorphism $$H: \Diff(D; D_1,\ldots,D_n)\to P_n<P_n(\Sigma_g),$$
	where $H(\phi)$ is an element in $P_n(\Sigma_g)$ represented by the path $(\phi_t(z_1),\ldots,\phi_t(z_n))$ in the configuration space, and $\{\phi_t\}$ is any isotopy in $\G$ between the identity $\phi_0$ and $\phi:=\phi_1$. 
	
	It follows from the results of Kim-Koberda \cite{KimKoberdaAntiTrees} (c.f. Crisp-Wiest \cite{CrispWiest} and Benaim-Gambaudo \cite{BenaimGambaudo}) that for each RAAG $\Gamma$ there exists $n,$ which can be assumed to satisfy $n>4,$ and an embedding of $\Gamma$ into $\Diff(D; D_1,\ldots,D_n)$ whose composition with $H$ is a quasi-isometric embedding of $\Gamma$ into $P_n$.
	By Lemma \ref{lma:above}, it gives as well a quasi-isometric embedding of $\Gamma$ into $P_n(\Sigma_g)$.
	Then in order to obtain a quasi-isometric embedding of $\Gamma$ into $\G$ it is enough to show that the map $H$ is large-scale Lipschitz whenever $n>4$. 
	
	This fact follows from our main result, Theorem \ref{Theorem: average wordlength bound}. Let us prove it.
	Set 
	$$
	X(D_1,\ldots,D_n):=\{(x_1,\ldots,x_n)\in X_n(\Sigma_g)|\thinspace x_i\in D_i\}.
	$$
	Let $\{\phi_t\}$ be any isotopy in $\G$ between the identity $\phi_0 = \id$ and $\phi:=\phi_1$. 
	It follows from Theorem \ref{Theorem: average wordlength bound} that there exist positive constants $A$ and $B$ which depend only
	on $n$ such that 
	$$
	\int_{X(D_1,\ldots,D_n)} |[\lambda(x,\phi)]|_{P_n(\Sigma_g)} \, d\mu^{\otimes n}(x) \leq A \cdot l_1(\phi) +B\thinspace .
	$$
	Note that for each $x\in X(D_1,\ldots,D_n)$ we have $[\lambda(x,\phi)]=H(\phi)$. Thus
	$$
	\int_{X(D_1,\ldots,D_n)} |[\lambda(x,\phi)]|_{P_n(\Sigma_g)} \, d\mu^{\otimes n}(x)=\vol(X(D_1,\ldots,D_n))\cdot |H(\phi)|_{P_n(\Sigma_g)},
	$$
	and we obtain the result, i.e., $H$ is large-scale Lipschitz:
	$$
	|H(\phi)|_{P_n(\Sigma_g)}\leq  \frac{A}{\vol(X(D_1,\ldots,D_n))}\cdot l_1(\phi) +\frac{B}{\vol(X(D_1,\ldots,D_n))}\thinspace .$$

Assume now that $M_1$ is a surface with boundary. We can embed $M_1$ in a closed surface $M_2 = \Sigma_g$ such that the embedding is area preserving. This induces a monomorphism $\iota \colon \Diff_{c,0}(M_1,\sigma_1) \hookrightarrow \Diff_{c,0}(M_2,\sigma_2)$ (note that elements of
$\Diff_{c,0}(M_1,\sigma_1)$ fix point-wise the neighbourhood of the boundary, so they can be extended by the identity). 
Since $\iota$ it Lipschitz, and the embedding of $\Gamma$ to $\Diff_{c,0}(M_2,\sigma_2)$ can be arranged to factor through $\Diff_{c,0}(M_1,\sigma_1)$, we get a quasi-isometric embedding to $\Diff_{c,0}(M_1,\sigma_1)$ (see \cite[Lemma 2.1]{BrandenburskyKedra2}). \end{proof}

\begin{rmk}
	We note that Corollary \ref{Corollary: Kapovich} implies Corollary \ref{Corollary: diam}, 
	providing the latter with a proof that does not use quasimorphisms.
\end{rmk}

\medskip

Furthermore, note that Corollary \ref{Corollary: Kapovich} implies that for each $k\in\mathbb{N}$ there is a quasi-isometric
embedding $i_k:\Z^k\to (\G,d_1)$. Moreover, there exists a $k$-tuple of autonomous Hamiltonian 
flows (one-parameter subgroups) $\{\{\phi_i^t\}_{t \in \R}\}_{1 \leq i \leq k}$ which have disjoint supports, and
$i_k(e_j)=\phi_j^1$ for each $1\leq j\leq k$, where $e_j=(0,\ldots,1,\ldots,0)$ and 1 lies in the $j$-th entry. The above $k$-tuple of flows defines a homomorphism $j_k:\R^k \to (\G,d_1)$ such that $i_k = j_k|_{\Z^k}.$ Since $\R^k$ is quasi-isometric to $\Z^k$, we obtain the following statement:

\medskip

\begin{cor}\label{Corollary: vector spaces 1}
	The metric group $(\G,d_1)$ admits a quasi-isometric embedding from $(\R^k,d)$ where $d$ is any metric on $\R^k$ induced by a vector-space norm.
\end{cor}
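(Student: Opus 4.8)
The plan is to deduce the statement from Corollary~\ref{Corollary: Kapovich} applied to the right-angled Artin group $\Gamma=\Z^k$, which is the RAAG associated to the complete graph on $k$ vertices, by upgrading the resulting quasi-isometric embedding of $\Z^k$ to a quasi-isometric \emph{group homomorphism} defined on all of $\R^k$.

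First I would record two features of the embedding $i_k\colon\Z^k\hookrightarrow(\G,d_1)$ produced by Corollary~\ref{Corollary: Kapovich}: it is a group homomorphism, and---because the defining graph of $\Z^k$ is complete, so that in the Kim--Koberda construction \cite{KimKoberdaAntiTrees} \emph{all} $k$ generators are supported on pairwise disjoint discs---the standard generators $e_1,\dots,e_k$ are carried to time-one maps $\phi_j=\phi_j^1$ of autonomous Hamiltonian flows $\{\phi_j^t\}_{t\in\R}$ with pairwise disjoint supports. Since diffeomorphisms with disjoint supports commute, the assignment $j_k(t_1,\dots,t_k)=\phi_1^{t_1}\circ\cdots\circ\phi_k^{t_k}$ is a well-defined group homomorphism $j_k\colon\R^k\to\G$ that restricts to $i_k$ on $\Z^k$; this is the map I would use.

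Next come the two quasi-isometry estimates, both taken with respect to the $\ell^1$-norm $\|\cdot\|_1$ on $\R^k$. For the upper bound, concatenating the flow segments $\{\phi_j^t\}_{t\in[0,t_j]}$ and applying the triangle inequality together with right-invariance gives $d_1(\id,j_k(v))\le\sum_j|t_j|\,c_j$, where $c_j=\frac{1}{\vol(M,\mu)}\int_M|X_j|\,d\mu$ and $X_j$ generates $\phi_j^t$; hence $d_1(j_k(v),j_k(w))=d_1(\id,j_k(v-w))\le C\|v-w\|_1$ with $C=\max_j c_j$. For the lower bound I would use that the lattice $\Z^k$ is a net in $(\R^k,\|\cdot\|_1)$: each $v\in\R^k$ lies within $\ell^1$-distance $k/2$ of some $m\in\Z^k$, so combining the Lipschitz bound just proved with the lower bound $d_1(\id,i_k(m))\ge\frac{1}{C'}\|m\|_1-B'$ coming from Corollary~\ref{Corollary: Kapovich} and the inequality $\|m\|_1\ge\|v\|_1-k/2$ yields $d_1(\id,j_k(v))\ge\frac{1}{C'}\|v\|_1-B''$ for a suitable constant $B''$. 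By right-invariance this becomes $\frac{1}{C'}\|v-w\|_1-B''\le d_1(j_k(v),j_k(w))\le C\|v-w\|_1$, so $j_k$ is a quasi-isometric embedding of $(\R^k,\|\cdot\|_1)$ into $(\G,d_1)$. Since all vector-space norms on the finite-dimensional space $\R^k$ are bi-Lipschitz equivalent, $j_k$ is then a quasi-isometric embedding of $(\R^k,d)$ for every norm-induced metric $d$, which is the claim.

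The one genuinely delicate point is the very first step: checking that the embedding supplied by Corollary~\ref{Corollary: Kapovich} can be chosen with its $k$ generators realized by commuting one-parameter subgroups, equivalently by disjointly supported autonomous Hamiltonian flows---this rests on the concrete form of the Kim--Koberda construction for a complete graph and on its compatibility with the area-preserving setting used in Corollary~\ref{Corollary: Kapovich}. Granting that, the remainder is a routine juggling of the quasi-isometry inequalities together with the elementary fact that a lattice is coarsely dense in $\R^k$, and I anticipate no obstacle there.
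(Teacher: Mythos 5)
Your proposal is correct and follows essentially the same route as the paper: the authors likewise apply Corollary \ref{Corollary: Kapovich} to the RAAG $\Z^k$, observe that the generators are realized as time-one maps of disjointly supported autonomous Hamiltonian flows, extend to a homomorphism $j_k\colon\R^k\to\G$ with $i_k=j_k|_{\Z^k}$, and conclude using the quasi-isometry between $\R^k$ and $\Z^k$. Your write-up merely makes explicit the Lipschitz upper bound and the lattice-net argument that the paper compresses into the phrase ``since $\R^k$ is quasi-isometric to $\Z^k$.''
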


\medskip

The following lemma allows us to prove that the quasi-isometric embeddings of $\Z^k,\R^k$ from Corollary \ref{Corollary: vector spaces 1} are in fact bi-Lipschitz embeddings.

\medskip

\begin{lma}\label{lma: qi to bi-Lip}
Let $(G,d)$ be a metric group and let $A \subset V$ be a subgroup of a normed linear space. Then each Lipschitz quasi-isometric embedding $j:A \to G$ of metric groups is a bi-Lipschitz embedding. If $A$ is discrete and finitely generated then each homomorphism $j:A \to G$ is Lipschitz.
\end{lma}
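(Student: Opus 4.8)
The plan is to reduce the statement to properties of the single function $\nu\colon A\to\R_{\ge 0}$, $\nu(a)=d(j(a),e)$, where $e\in G$ is the identity. First I would record that, since $j$ is a group homomorphism and $d$ is right-invariant, $\nu$ is symmetric, $\nu(-a)=\nu(a)$, and subadditive, $\nu(a+b)\le\nu(a)+\nu(b)$, and that $d(j(a),j(b))=\nu(a-b)$ while $d_A(a,b)=\|a-b\|$, where $\|\cdot\|$ is the norm of $V$ restricted to $A$. Under this dictionary, ``$j$ is Lipschitz'' reads $\nu(a)\le L\|a\|$ for some $L\ge 1$, and ``$j$ is a quasi-isometric embedding'' gives in addition a constant $C\ge 0$ with $\nu(a)\ge\tfrac1L\|a\|-C$ (after enlarging $L$); the target of the first assertion is the clean two-sided bound $\tfrac1L\|a\|\le\nu(a)\le L\|a\|$, which rewrites as the bi-Lipschitz inequality for $j$.

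The crux is removing the additive error $C$, and here the hypothesis that $A$ lies in a linear space is essential, as it guarantees $na\in A$ for all $n\in\Z$. For $n\ge 1$, subadditivity gives $\nu(na)\le n\,\nu(a)$, while the quasi-isometric lower bound gives $\nu(na)\ge\tfrac nL\|a\|-C$; dividing by $n$ and letting $n\to\infty$ yields $\nu(a)\ge\tfrac1L\|a\|$. Combined with the Lipschitz hypothesis this is exactly the bi-Lipschitz estimate, which finishes the first assertion (and in particular forces $j$ to be injective, since $a\ne 0$ implies $\|a\|>0$).

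For the second assertion it suffices to prove the Lipschitz bound $\nu(a)\le L\|a\|$ for an arbitrary homomorphism $j$ (the first part then upgrades it to bi-Lipschitz). A finitely generated $A$ spans a finite-dimensional subspace of $V$, and a finitely generated discrete subgroup of a finite-dimensional normed space is free abelian with an $\R$-linearly independent $\Z$-basis, i.e.\ a full-rank lattice in its real span $W\cong\R^k$; fix such a basis $a_1,\dots,a_k$. Since all norms on $W$ are equivalent, there is $C_0>0$ with $\sum_i|n_i|\le C_0\|\sum_i n_i a_i\|$ for all integers $n_i$. Then for $a=\sum_i n_i a_i$, subadditivity gives $\nu(a)\le\sum_i|n_i|\,\nu(a_i)\le\big(\max_i\nu(a_i)\big)\sum_i|n_i|\le C_0\big(\max_i\nu(a_i)\big)\|a\|$, so $j$ is Lipschitz.

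I expect the only mildly delicate point to be the structural input in the second part --- that a finitely generated discrete subgroup of a normed space is a full-rank lattice in its linear span --- since this is what justifies comparing the coefficient norm $\sum_i|n_i|$ with the ambient norm. Everything else is the one-line homogenization observation $\nu(na)\le n\,\nu(a)$ together with the invariance of the two metrics.
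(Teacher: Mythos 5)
Your proposal is correct and follows essentially the same route as the paper: the additive error in the quasi-isometric lower bound is killed by evaluating on $na$, using subadditivity of $a\mapsto d(j(a),e)$ (from right-invariance) to get $\nu(na)\le n\,\nu(a)$, dividing by $n$ and letting $n\to\infty$; and the Lipschitz bound for homomorphisms from a discrete finitely generated $A$ comes from identifying $A$ with a lattice whose coefficient norm is equivalent to the ambient norm. Your write-up is in fact slightly more detailed than the paper's on that last structural point (the paper simply asserts $A\cong\Z^l$ with equivalent norms), and your identification of discreteness as the essential hypothesis there is exactly right.
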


\begin{proof}
Let $j: A \to G$ be a homomorphism. If $A$ is discrete and finitely generated, then $A \cong \Z^l$ for $l \in \Z_{\geq 0},$ the norm being equivalent to the standard norm on $\Z^l.$ The upper bound $d(j(x),\id) \leq C |x|$ for all $x \in A$ is now immediate. Hence $j$ is Lipschitz. 

Suppose now that $A \subset V$ is a subgroup and $j:A \to G$ is a Lipschitz quasi-isometric embedding. Let $C \geq 1$ be such that \begin{equation}\label{eq: lower qi bound} \frac{1}{C} |x| - B \leq d(j(x),\id) \leq C |x| \end{equation} for a constant $B \geq 0$ and all $x \in A.$ We claim that the inequality \eqref{eq: lower qi bound} holds with $B=0.$ Indeed, consider \eqref{eq: lower qi bound} for $x^m$ where $m \in \Z_{>0}.$ We have \[ \frac{1}{C} m |x| - B \leq d(j(x^m),\id) = d(j(x)^m,\id) \leq m\cdot d(j(x),\id),\] where the last inequality is due to the right-invariance of $d.$ Dividing by $m$ yields \[ \frac{1}{C} |x| - B/m \leq d(j(x),\id), \] which finishes the proof by taking limits as $m \to \infty.$
\end{proof}

\medskip

This immediately implies the following strengthening of Corollary \ref{Corollary: vector spaces 1}, since $j_k$ is evidently Lipschitz.

\medskip

\begin{cor}\label{Corollary: vector spaces 2}
	The metric group $(\G,d_1)$ admits a bi-Lipschitz embedding from $(\R^k,d)$ where $d$ is any metric on $\R^k$ induced by a vector-space norm.
\end{cor}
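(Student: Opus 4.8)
The plan is to obtain Corollary~\ref{Corollary: vector spaces 2} by feeding the homomorphism $j_k\colon\R^k\to(\G,d_1)$ constructed just before Corollary~\ref{Corollary: vector spaces 1} into Lemma~\ref{lma: qi to bi-Lip}. Recall that $j_k$ is assembled from a $k$-tuple $\{\{\phi_i^t\}_{t\in\R}\}_{1\le i\le k}$ of autonomous Hamiltonian flows with pairwise disjoint supports: since disjoint supports make the flows commute, the assignment $v=(v_1,\ldots,v_k)\mapsto \phi_1^{v_1}\circ\cdots\circ\phi_k^{v_k}$ is a genuine group homomorphism $\R^k\to\G$, and by construction $j_k|_{\Z^k}=i_k$ is the quasi-isometric embedding of $\Z^k$ furnished by Corollary~\ref{Corollary: Kapovich}. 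So the two hypotheses of Lemma~\ref{lma: qi to bi-Lip} to verify for $j_k$, viewed as a map from $(\R^k,d)$ with $d$ any vector-space norm, are that it is Lipschitz and that it is a quasi-isometric embedding.

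First I would check that $j_k$ is Lipschitz: for $v\in\R^k$, concatenating the finitely many flow segments $t\mapsto\phi_i^{tv_i}$, $t\in[0,1]$, yields an isotopy from $\id$ to $j_k(v)$ whose $L^1$-length is $\sum_i |v_i|\cdot l_1(\{\phi_i^t\}_{t\in[0,1]})$, hence at most $\big(\max_i l_1(\{\phi_i^t\}_{t\in[0,1]})\big)\cdot\sum_i|v_i|$; since all norms on $\R^k$ are equivalent this is bounded by $C|v|$, so $d_1(\id,j_k(v))\le C|v|$. Next I would note that $j_k$ is a quasi-isometric embedding of $(\R^k,d)$: the inclusion $\Z^k\hookrightarrow\R^k$ is a quasi-isometry (coarsely dense, with the ambient norm restricting to an equivalent norm on $\Z^k$), and $j_k|_{\Z^k}=i_k$ is a quasi-isometric embedding by Corollary~\ref{Corollary: Kapovich}; combining this with the Lipschitz bound just established (which controls how far $j_k$ moves a point of $\R^k$ relative to a nearby lattice point) shows $j_k$ itself satisfies a quasi-isometric embedding estimate.

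Finally, with $j_k$ Lipschitz and a quasi-isometric embedding, Lemma~\ref{lma: qi to bi-Lip} applied with $A=V=\R^k$ immediately upgrades it to a bi-Lipschitz embedding, which is exactly the statement of the corollary. I do not anticipate a real obstacle: all the substance is already contained in Corollary~\ref{Corollary: Kapovich} and Lemma~\ref{lma: qi to bi-Lip}, and the only point requiring care is the routine bookkeeping that the $L^1$-length of the natural path realizing $j_k(v)$ grows at most linearly in $|v|$ with a constant independent of $v$, which is guaranteed by the disjoint-support construction of the flows.
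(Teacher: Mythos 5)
Your proposal is correct and follows exactly the paper's route: the paper obtains Corollary~\ref{Corollary: vector spaces 2} by observing that $j_k$ is ``evidently Lipschitz'' and a quasi-isometric embedding (the latter being Corollary~\ref{Corollary: vector spaces 1}), and then invoking Lemma~\ref{lma: qi to bi-Lip}. You have merely filled in the routine Lipschitz estimate via the autonomy and disjoint supports of the flows, which is exactly the intended justification.
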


\medskip

\begin{rmk}\label{Remark: bilipschitz-qm}
	It should be possible to prove Corollary \ref{Corollary: vector spaces 2} by using quasimorphisms, as in \cite{BrandShelS2}.
\end{rmk}

\medskip 

\begin{rmk}\label{Remark: Jensen 1 to p}
	Let $p \geq 1.$ Note that since, by Jensen's (or H\"{o}lder's) inequality, \[d_1 \leq d_p,\] all the above results for $d_1$ continue to hold for $d_p.$
\end{rmk}

\medskip 

Recall that if $M$ has boundary then we assume that elements of $\Diff_{c,0}(M,\sigma_1)$ fix point-wise an open neighbourhood of $\partial M$. Thus if $(M_1,\sigma_1) \hookrightarrow (M_2,\sigma_2)$ is a measure preserving embedding of manifolds, then extending a diffeomorphism on $M_1$ by the identity to a diffeomorphim of $M_2$ gives a well defined 
monomorphism $\iota \colon \Diff_{c,0}(M_1,\sigma_1) \hookrightarrow \Diff_{c,0}(M_2,\sigma_2)$.

We finish this section with a question.

\medskip

\begin{qtn} 
Let $(M_1,\sigma_1) \hookrightarrow (M_2,\sigma_2)$ be a measure preserving embedding of surfaces.
Is the monomorphism $\iota \colon \Diff_{c,0}(M_1,\sigma_1) \hookrightarrow \Diff_{c,0}(M_2,\sigma_2)$ a quasi-isometric embedding?
\end{qtn}

\medskip

\begin{rmk}\label{rmk: question embedding}
Note that this question is motivated by our results since the statement holds for the compositions of $\iota$ with the embeddings of right-angled Artin groups and $(\R^k,d)$ from the proofs of  Corollary \ref{Corollary: Kapovich} and Corollary \ref{Corollary: vector spaces 2}.
\end{rmk}

\subsection{Outline of the proof}

In order to show Theorem \ref{Theorem: average wordlength bound}, 
we define a Riemannian metric $g$ on $X_n(M)$ such that $d_g$, the geodesic metric on $X_n(M)$ induced by $g$,
extends to the geodesic metric on the compactification $\ol{X}_n(M)$.
This allows us to use the Schwarz-Milnor lemma.  

More precisely, Theorem \ref{Theorem: average wordlength bound} is a consequence of the following two propositions 
(we defer the proofs to Section \ref{sec: proofs}).




%
%
%
%
%
%
%

\medskip

\begin{prop}\label{Prop: top}
Let $\lambda$ be a piecewise $C^1$ loop in $X_n(M)$ based at $q.$ Let $S$ be a finite generating set of $P_n(M).$  The word norm of the class $[\lambda] \in \pi_1(X_n(M),q) \cong P_n(M)$ with respect to $S$ satisfies \[ |[\lambda]|_{P_n(M)} \leq A_0 \cdot l_g(\lambda) + B_0,\] for constants $A_0,B_0 > 0$ depending only on $S$ and $n.$ 

\end{prop}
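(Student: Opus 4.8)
The statement is exactly the Schwarz--Milnor (\v{S}varc--Milnor) lemma applied to the compactification $\ol{X}_n(M)$, together with the comparison between length in $X_n(M)$ and length in $\ol{X}_n(M)$. So the plan is to first reduce the statement to one about the compact geodesic space $\ol{X}_n(M)$, and then invoke the lemma in the clean form in which it is usually stated.

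First I would recall the relevant input: $\ol{X}_n(M)$ is a compact geodesic metric space (with the metric $d_g$ coming from the chosen Riemannian metric $g$ on $X_n(M)$), the inclusion $X_n(M) \hookrightarrow \ol{X}_n(M)$ induces an isomorphism on $\pi_1$, and the deck group $P_n(M) = \pi_1(X_n(M),q)$ acts on the universal cover $\widetilde{\ol{X}}_n(M)$ by isometries, cocompactly (the quotient is the compact space $\ol{X}_n(M)$) and properly discontinuously. Fix a finite generating set $S$; by the \v{S}varc--Milnor lemma the orbit map $g \mapsto g\cdot \tilde q$ from $(P_n(M), d_S)$ to $(\widetilde{\ol{X}}_n(M), \tilde d_g)$ is a quasi-isometry, i.e. there are constants $A_0, B_0>0$ with
\[
|h|_{S} \;\le\; A_0 \cdot \tilde d_g(\tilde q,\, h\cdot \tilde q) \;+\; B_0
\]
for all $h \in P_n(M)$. (The dependence of $A_0, B_0$ only on $S$ and $n$ comes from the fact that $\ol{X}_n(M)$ and its metric depend only on $M$ and $n$, with $M,\mu,g$ fixed throughout.)

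Next I would connect the left-hand side of this inequality to a piecewise $C^1$ loop $\lambda$ in $X_n(M)$. Lift $\lambda$ to a path $\tilde\lambda$ in the universal cover starting at $\tilde q$; its endpoint is $[\lambda]\cdot \tilde q$, where $[\lambda]\in\pi_1(X_n(M),q)\cong\pi_1(\ol{X}_n(M),q)\cong P_n(M)$ under the isomorphisms above. Hence $\tilde d_g(\tilde q, [\lambda]\cdot\tilde q) \le \ell_{\tilde d_g}(\tilde\lambda) = \ell_{d_g}(\lambda)$, the length being preserved under the covering projection. Here one must be slightly careful: $\ell_{d_g}(\lambda)$ — the length of $\lambda$ as a path in the metric space $(\ol{X}_n(M), d_g)$ — agrees with $l_g(\lambda)$, the Riemannian length computed from $g$ on the smooth locus $X_n(M)$, because $\lambda$ actually lies in $X_n(M)$ where $d_g$ is the length metric of the Riemannian metric $g$. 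Combining, $|[\lambda]|_S \le A_0 \cdot l_g(\lambda) + B_0$, which is the claim.

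The one genuine technical point — everything else being a matter of citing the standard lemma and chasing definitions — is the verification that the $P_n(M)$-action on $\widetilde{\ol{X}}_n(M)$ is by isometries and is properly discontinuous and cocompact, i.e. that the hypotheses of the \v{S}varc--Milnor lemma genuinely apply to the compactification rather than to the open configuration space. This rests on: (i) $\ol{X}_n(M)$ being compact and $d_g$ being a geodesic metric on it that restricts to the Riemannian length metric on $X_n(M)$ (so $\widetilde{\ol{X}}_n(M)$ is a proper geodesic space and the action is by isometries, hence properly discontinuous and cocompact since the quotient is compact); and (ii) $\pi_1(X_n(M))\to\pi_1(\ol{X}_n(M))$ being an isomorphism, so that the deck group is indeed $P_n(M)$. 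Both facts are properties of the Axelrod--Singer--Kontsevich (Fulton--MacPherson) compactification, established via the Sinha model, and I expect them to be proved in Section~\ref{sec: proofs}; granting them, the proof of Proposition~\ref{Prop: top} is immediate from the \v{S}varc--Milnor lemma as above.
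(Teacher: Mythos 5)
Your proposal is correct and follows essentially the same route as the paper: apply the Schwarz--Milnor lemma to the deck-group action of $P_n(M)\cong\pi_1(\ol{X}_n(M))$ on the universal cover of the compactification, using that the metric $g=\alpha_n^*(euc)$ extends to $\ol{X}_n(M)$ (via Sinha's manifold-with-corners model) and that the lift of $\lambda$ displaces the basepoint by at most $l_g(\lambda)$. The paper's proof is exactly this argument, citing \cite[Proposition 8.19]{BridsonHaefliger-book} for the lemma and \cite[Theorem 4.4]{Sinha-comp} for the properties of the compactification that you correctly flag as the one genuine technical input.
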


%

The next proposition says that the average length of loops $\lambda(x,\overline{\phi})$ is controlled 
by $l_1(\overline{\phi})$.

\medskip

\begin{prop}\label{prop: analysis}
Let $\overline{\phi} = \{\phi_t\}$ be an isotopy in $\G$ such that $\phi_0 = 1$. There exist constants $A_1,B_1 > 0$ depending only on $n,$ such that 
\[\int_{X_n(M)\setminus Z \cup (\phi_1)^{-1}(Z)} l_g(\lambda(x,\overline{\phi}))  \, d\mu^{\otimes n}(x) \leq A_1 \cdot l_1(\overline{\phi}) +B_1.\]
\end{prop}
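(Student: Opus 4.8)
The statement to prove is Proposition~\ref{prop: analysis}: the average (over the configuration space) of the $g$-length of the closed-up trajectory loop $\lambda(x,\overline{\phi})$ is bounded above by an affine function of the $L^1$-length of the isotopy. The loop $\lambda(x,\overline{\phi}) = \gamma(x)\#\{\phi_t\cdot x\}\#\gamma(\phi_1(x))^{-1}$ has three pieces, so $l_g(\lambda(x,\overline\phi)) \leq l_g(\gamma(x)) + l_g(\{\phi_t\cdot x\}) + l_g(\gamma(\phi_1(x)))$, and I would bound the average of each of the three terms separately. The two short-path terms contribute only to the constant $B_1$: since the system of short paths is chosen continuously off a negligible set $Z$ and measurably everywhere, and since $\overline X_n(M)$ is compact, one arranges that $l_g(\gamma(x))$ is a bounded measurable function of $x$; its integral over $X_n(M)$ is then finite and bounded by a constant depending only on $n$ (and $M,\mu,g$). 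For the term $l_g(\gamma(\phi_1(x)))$ one changes variables by the measure-preserving diffeomorphism $\phi_1$, reducing it to $\int_{X_n(M)} l_g(\gamma(y))\,d\mu^{\otimes n}(y)$, the same bounded integral. So the whole problem reduces to bounding
\[
\int_{X_n(M)\setminus Z\cup(\phi_1)^{-1}(Z)} l_g\bigl(\{\phi_t\cdot x\}_{t\in[0,1]}\bigr)\, d\mu^{\otimes n}(x) \leq A_1\cdot l_1(\overline\phi).
\]

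**Estimating the main term.** The trajectory $t\mapsto \phi_t\cdot x = (\phi_t(x_1),\ldots,\phi_t(x_n))$ in $X_n(M)\subset M^n$ has velocity at time $t$ given by $(X_t(\phi_t(x_1)),\ldots,X_t(\phi_t(x_n)))$, where $X_t$ is the generating vector field. Its $g$-speed is the $g$-norm of this tangent vector to $X_n(M)$. The crucial geometric input — which must come from the explicit construction of $g$ in Section~\ref{sec: proofs} — is a comparison estimate of the form: there is a constant $C>0$ (depending only on $M,\mu,g,n$) so that for every $v=(v_1,\ldots,v_n)$ tangent to $X_n(M)$ at a point $(y_1,\ldots,y_n)$,
\[
|v|_g \leq C\sum_{i=1}^n |v_i|_{g_M},
\]
i.e. the metric $g$ on $X_n(M)$ is dominated by (a constant times) the product metric restricted from $M^n$. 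Granting this, $l_g(\{\phi_t\cdot x\}) \leq C\sum_{i=1}^n \int_0^1 |X_t(\phi_t(x_i))|_{g_M}\,dt = C\sum_{i=1}^n \mathrm{len}(\{\phi_t(x_i)\})$, the sum of the Riemannian lengths of the $n$ individual point-trajectories. Integrating over $x\in X_n(M)$ and enlarging the domain of integration to all of $M^n$ (the integrand is nonnegative), Fubini's theorem gives
\[
\int_{X_n(M)} l_g(\{\phi_t\cdot x\})\,d\mu^{\otimes n}(x) \leq C\cdot n\cdot \mathrm{vol}(M,\mu)^{n-1}\cdot \int_M \mathrm{len}(\{\phi_t(y)\})\,d\mu(y),
\]
and by the $p=1$ remark in the Preliminaries, $\int_M \mathrm{len}(\{\phi_t(y)\})\,d\mu(y) = \mathrm{vol}(M,\mu)\cdot l_1(\overline\phi)$. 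Collecting constants into $A_1$ (which depends only on $n$, as all the others depend only on $M,\mu,g$) finishes this term.

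**The main obstacle.** The heart of the matter is establishing the domination $|v|_g \leq C\sum_i |v_i|_{g_M}$ for the metric $g$ on $X_n(M)$. This is delicate precisely near the boundary of the compactification: one wants $g$ to extend to a genuine (geodesic) metric on the compact space $\overline X_n(M)$ — so $g$ must degenerate in the "collision" directions as points come together — while simultaneously remaining bounded above by the product metric. The construction of $g$ in Section~\ref{sec: proofs}, presumably via Sinha's embedding of $\overline X_n(M)$ into a Euclidean space (pulling back the Euclidean metric, or a modification thereof), has to be tailored so that the natural projection $\overline X_n(M)\to M^n$, which away from the diagonals is the identity, is distance non-increasing up to a constant. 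I expect the verification of this — checking uniformly over the boundary strata corresponding to the various collision graphs, where the local model involves unit normal bundles to the multidiagonals — to be the technically demanding step; everything else (the three-piece split, the change of variables by $\phi_1$, the Fubini/$L^1$ computation) is routine once that comparison is in hand. One should also double-check measurability and finiteness of $l_g(\gamma(x))$ as a function of $x$, which follows from continuity of the short-path system off $Z$ together with compactness of $\overline X_n(M)$, so that the relevant integrals genuinely converge.
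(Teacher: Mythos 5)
Your reduction to the main term is the same as the paper's (uniformly bounded short paths via compactness of $\overline{X}_n(M)$, triangle inequality, and the $p=1$ identity for $l_1$), but the comparison estimate you hinge everything on, $|v|_g \leq C\sum_i |v_i|_{g_M}$, is false, and in fact \emph{must} be false for the paper's overall strategy to work. Concretely, the $\pi_{ij}$-component of Sinha's embedding contributes $D\pi_{ij}(v) = \frac{v_i-v_j}{|x_i-x_j|} - \frac{(x_i-x_j)\langle v_i-v_j,\,x_i-x_j\rangle}{|x_i-x_j|^3}$, whose norm equals $|v_i-v_j|/|x_i-x_j|$ when $v_i-v_j$ is orthogonal to $x_i-x_j$; this blows up as $x_i\to x_j$ and is not dominated by $|v_i|+|v_j|$. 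More structurally: a loop in which $x_i$ and $x_j$, at mutual distance $\epsilon$, wind around each other $k$ times has product-metric length $O(k\epsilon)$ but represents the $k$-th power of a generator of $P_n(M)$, whose word length grows linearly in $k$; Proposition \ref{Prop: top} then forces $l_g$ of that loop to grow linearly in $k$ uniformly in $\epsilon$. So any metric satisfying Proposition \ref{Prop: top} cannot be dominated by the product metric, and your Fubini step collapses.

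What the paper actually proves (Lemma \ref{lma: two metrics}) is the weaker comparison $|v|_g \leq C\,|v|_{g_0}$ with $|v|_{g_0} = |v|/d(x)$, where $d(x)$ is the minimal pairwise distance --- i.e.\ $g$ is dominated by the product metric \emph{rescaled by the singular factor} $1/d(x)$. The real analytic content of the proposition is then the averaging estimate \cite[Lemma 5.2]{MichalM-diameter}: despite the $1/d(x)$ blow-up of the integrand along near-collision configurations, the integral $\int_{X_n(M)} l_{g_0}(\{\phi_t\cdot x\})\, d\mu^{\otimes n}(x)$ is still bounded by $C'\cdot l_1(\overline{\phi})$, essentially because the set of configurations with $d(x)$ small has correspondingly small measure. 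Your proposal is missing both the correct form of the metric comparison and this averaging lemma, which is the step that cannot be replaced by a routine Fubini argument.
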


To show this proposition, we first compare a metric $g$ to an auxiliary metric $g_0$. 
The metric $g_0$ does not extend to a metric on the compactification $\ol{X}_n(M)$, but 
it is relatively easy to show \cite[Lemma 5.2]{MichalM-diameter}, 
that the inequality from Proposition \ref{prop: analysis} holds 
for $g_0$.

\section{Proofs.}\label{sec: proofs}

\subsection{Compactification of the configuration space}\label{subsection: compactification}

In this section $M$ is a compact $m$-dimensional manifold. 
Below we describe the compactification of $X_n(M)$ using an embedding of the configuration space into a high-dimensional 
Euclidean space. We follow closely the construction given in \cite{Sinha-comp}.

Let us start with describing a family of maps on $X_n(\mathbb{R}^d)$. 
Let $\pi_{ij} \colon X_n(\mathbb{R}^d) \to S^{d-1}$ by defined by the formula \[ \pi_{ij}(x) = \frac{x_i-x_j}{|x_i-x_j|}, \]

where $0 \leq i < j \leq n$ and $x = (x_1,\ldots,x_n) \in X_n(\mathbb{R}^d).$
Let $s_{ijk} \colon X_n(\mathbb{R}^d) \to [0,\infty]$ be defined by the formula \[s_{ijk}(x) = \frac{|x_i-x_j|}{|x_i-x_k|},\]

where $0 \leq i<j<k \leq n$ and $[0,\infty]$ is the one point compactification of $[0,\infty)$. 
Let $\iota \colon X_n(\mathbb{R}^d) \hookrightarrow (\mathbb{R}^d)^n$ be the standard inclusion
and let $A_n[\mathbb{R}^d] = (\mathbb{R}^d)^n \times (S^{n-1})^{n \choose 2} \times [0,\infty]^{n \choose 3}$.
We consider the embedding of $X_n(\mathbb{R}^d)$ to the ambient space $A_n[\mathbb{R}^d]$ given by
the product map \[ \alpha_n = \iota \times (\pi_{ij}) \times (s_{ijk}) \colon X_n(\mathbb{R}^d) 
\to A_n[\mathbb{R}^d].\]

Suppose $M$ is a submanifold of $\mathbb{R}^d$. 
Let $\ol{X}_n(M)$ be the closure of the image $\alpha_n(X_n(M))$ in 
$A_n[\mathbb{R}^d]$.
Then $\ol{X}_n(M)$ is a manifold with boundary, and the inclusion $X_n(M) \hookrightarrow \ol{X}_n(M)$ is a 
homotopy equivalence \cite[Theorem 4.4 and Corollary 4.5]{Sinha-comp}.


\subsection{Two metrics on $X_n(M)$ and proof of Proposition \ref{Prop: top}}

On $[0,\infty]$ we introduce the structure of a smooth manifold by a $1$-map atlas $e^{-x} \colon [0,\infty] \to [0,1]$.
Let $g_{exp}$ be the Riemannian metric on $[0,\infty]$ given by the pull-back of the Euclidean metric
from $[0,1]$. 
In particular, we have that $|\del_x|_{g_{exp}} = e^{-x}$, where $\del_x$ is a standard 'unit' vector field on $[0,\infty)$.  

Let $euc$ be the metric on $A_n[\mathbb{R}^d]$ given by the product of standard metrics on $\mathbb{R}^d$, $S^{d-1}$ and 
$([0,\infty],g_{exp})$.
The first metric on $X_n(\mathbb{R}^d)$ we want to consider is defined to be the pull-back of $euc$ to $X_n(\mathbb{R}^d)$ 
by the map $\alpha_n$: \[ g = \alpha_n^*(euc).\]

Now, since we regard $M$ as a submanifold of $\mathbb{R}^d$,
$g$ induces a Riemannian metric on $X_n(M)$.

The second metric is defined as follows.
Let $x = (x_1,\ldots,x_n) \in X_n(\mathbb{R}^d)$. 
Let $d(x)$ denote the minimal distance between the points in $x$, that is: \[ d(x) = \min\{|x_i - x_j|~\colon~i \neq j\},\] where $|x-y|$ denotes the standard Euclidean distance between vectors $x,y \in \mathbb{R}^d$.  

Note that on $X_n(\mathbb{R}^d)$ we have the Euclidean metric restricted from $(\mathbb{R}^d)^n$. 
We rescale this metric by the factor $\frac{1}{d}$, i.e. we define a metric $g_0$ on $X_n(\mathbb{R}^d)$ by \[ |v|_{g_0} = \frac{|v|}{d(x)},\] where $v \in T_x(X_n(\mathbb{R}^d)) = (\mathbb{R}^d)^n$ and $|v|$ is the Euclidean length of $v \in (\mathbb{R}^d)^n$.
One should note that $d(x)$, and consequently $g_0$, is continuous, but not differentiable. 
A manifold with such a metric is called a $C^0$-Riemannian manifold. 
On a $C^0$-Riemannian manifold one defines the lengths of paths and a geodesic metric in the same way as in the smooth case.

Again, $g_0$ restricts to a $C^0$-Riemannian metric on $X_n(M)$. 

\begin{proof}[Proof of Proposition \ref{Prop: top}]
We need to show that 
\[ |[\lambda]|_{P_n(M)} \leq A_0 \cdot l_g(\lambda) + B_0,\] 
where $\lambda$ be a piecewise $C^1$ loop in $X_n(M)$ based at $q$,
$[\lambda] \in \pi_1(X_n(M),q) \cong P_n(M)$ and $|[\lambda]|_{P_n(M)}$ is the word norm of $[\lambda]$. 

By \cite[Theorem 4.4]{Sinha-comp} $\ol{X}_n(M)$ is a manifold with corners and $X_n(M)$ is its interior.
Moreover, the coordinate charts on $\ol{X}_n(M)$ are defined in such a way that the embedding of 
$\ol{X}_n(M)$ into $A_n[\mathbb{R}^d]$ is smooth. 
Thus one can restrict the metric $euc$ to $\ol{X}_n(M)$.
In other words, $g = \alpha_n^*(euc)$ extends to the compactification $\ol{X}_n(M)$.
Now the proposition follows directly from the Schwarz-Milnor lemma \cite{Efremovic,Svarc,Milnor}.
More precisely, consider the following formulation of this result \cite[Proposition 8.19]{BridsonHaefliger-book}.

\begin{lma}\label{Lma:SM}
Let a group $\Gamma$ act properly and discontinuously by isometries on a proper length space $X.$ If the action is cocompact,
then $\Gamma$ is finitely generated and for any choice of base-point $x_0 \in X,$ the map $\Gamma \to X,$ $h \mapsto h \cdot x_0,$ 
is a quasi-isometry.\end{lma}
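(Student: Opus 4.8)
The plan is to give the standard proof of this classical result (the Schwarz--Milnor, or Milnor--\v{S}varc, lemma), in which a single geometric construction---subdividing a geodesic and tracking it by translates of a fixed ball---simultaneously produces a finite generating set and the two-sided comparison between the word metric on $\Gamma$ and the orbit metric on $X$. First I would record two preliminary reductions. Since $X$ is a proper length space, the Hopf--Rinow--Cohn-Vossen theorem guarantees that $X$ is geodesic, so any two of its points are joined by a length-minimizing segment. Since the action is cocompact, there is a compact set whose $\Gamma$-translates cover $X$; enlarging it, I may fix $R>0$ with $\Gamma\cdot\ol{B}=X$, where $\ol{B}=\ol{B}(x_0,R)$ denotes the closed ball. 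I then take as candidate generating set $S=\{\,s\in\Gamma : s\neq 1,\ d(x_0,sx_0)\leq 3R\,\}$. Properness of $X$ makes the ball $\ol{B}(x_0,3R)$ compact, and proper discontinuity of the action then forces $S$ to be finite; moreover $S=S^{-1}$, since $d$ is symmetric and $\Gamma$ acts by isometries.

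The core step is to show that $S$ generates $\Gamma$, together with the estimate $|\gamma|_S\leq R^{-1}d(x_0,\gamma x_0)+1$. Given $\gamma\in\Gamma$, I would fix a geodesic from $x_0$ to $\gamma x_0$ of length $L=d(x_0,\gamma x_0)$ and mark points $x_0=p_0,p_1,\ldots,p_k=\gamma x_0$ along it with $d(p_{i-1},p_i)\leq R$ and $k\leq L/R+1$. Because $\Gamma\cdot\ol{B}=X$, each $p_i$ lies within $R$ of some orbit point, so I may choose $\gamma_i\in\Gamma$ with $d(p_i,\gamma_i x_0)\leq R$, taking $\gamma_0=1$ and $\gamma_k=\gamma$ at the endpoints. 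Setting $s_i=\gamma_{i-1}^{-1}\gamma_i$, isometry invariance and the triangle inequality give $d(x_0,s_i x_0)=d(\gamma_{i-1}x_0,\gamma_i x_0)\leq R+R+R=3R$, so each $s_i\in S\cup\{1\}$; and the product telescopes as $\gamma=\gamma_0 s_1 s_2\cdots s_k=s_1\cdots s_k$. This shows $\langle S\rangle=\Gamma$ and yields the claimed upper bound on the word norm.

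For the reverse inequality I would set $\mu=\max_{s\in S}d(x_0,sx_0)$ and, writing $\gamma=s_1\cdots s_m$ with $m=|\gamma|_S$, bound $d(x_0,\gamma x_0)$ by the triangle inequality along the orbit points $x_0, s_1 x_0, s_1 s_2 x_0,\ldots,\gamma x_0$; each consecutive distance equals $d(x_0,s_i x_0)\leq\mu$ since $\Gamma$ acts by isometries, giving $d(x_0,\gamma x_0)\leq\mu\,|\gamma|_S$. Combined with the previous step this yields $\mu^{-1}d(x_0,\gamma x_0)\leq|\gamma|_S\leq R^{-1}d(x_0,\gamma x_0)+1$, so the orbit map $h\mapsto h\cdot x_0$ is a quasi-isometric embedding of $(\Gamma,d_S)$ into $X$. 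Finally, the identity $\Gamma\cdot\ol{B}=X$ says exactly that the orbit of $x_0$ is $R$-dense in $X$, so the embedding is coarsely surjective and hence a quasi-isometry. The one step requiring genuine care is the finiteness of $S$: this is precisely where both properness of $X$ and proper discontinuity of the action are indispensable, and it is the step whose failure would invalidate even the finite-generation conclusion if either hypothesis were dropped.
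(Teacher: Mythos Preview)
Your proof is correct and is the standard argument for the Schwarz--Milnor lemma. Note, however, that the paper does not actually prove this lemma: it is quoted as a known result, with a reference to \cite[Proposition 8.19]{BridsonHaefliger-book}, and then applied directly to the universal cover of the compactified configuration space. So there is no ``paper's own proof'' to compare against; you have simply supplied the textbook proof of a cited fact.
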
 
Denote by $g_c$ the metric on $\ol{X}_n(M)$ obtained from $euc$. 
We apply Lemma \ref{Lma:SM} to $X = \til{\ol{X}}_n(M),$ $\Gamma = \pi_1({\ol{X}}_n(M),\pi(x_0)) \cong  \pi_1({X}_n(M),\pi(x_0)),$ and the length space structure on $X$ being induced by the lift $\til{g_c}~=~\pi^*g_c$ to $X$ of $g_c$ on $K = X/\Gamma = {\ol{X}}_n(M)$ by the natural projection map $\pi:X \to K.$ Note that $d_{\til{g_c}}(x_0,h \cdot x_0) \leq l_{g_c}(\lambda)$ where $\lambda$ is any $C^1$ loop in $X_n(M)$ based at $\pi(x_0)$ representing the element~$h$. 
Moreover, since $g_c$ is an extension of $g$, we have $l_{g_c}(\lambda) = l_g(\lambda)$. 
\end{proof}

\subsection{Proof of Proposition \ref{prop: analysis}}

We begin with the proof of the following technical result.
\medskip

\begin{lma}\label{lma: two metrics}
There exists $C>0$ depending only on $n$, such that $|v|_g \leq C \cdot |v|_{g_0}$ for every $v \in T(X_n(M))$.
\end{lma}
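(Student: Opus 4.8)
The plan is to estimate $|v|_g$ directly from the definition $g = \alpha_n^*(euc)$. Since $euc$ is a product metric and $\alpha_n = \iota \times (\pi_{ij}) \times (s_{ijk})$, for $v \in T_x X_n(M) \subset (\mathbb{R}^d)^n$ one has the splitting
\[ |v|_g^2 = |d\iota_x(v)|^2 + \sum_{i<j} |d(\pi_{ij})_x(v)|^2 + \sum_{i<j<k} |d(s_{ijk})_x(v)|_{g_{exp}}^2, \]
so it suffices to bound each of the finitely many summands (their number being controlled by $n$) by a fixed multiple of $|v|_{g_0}^2 = |v|^2/d(x)^2$. The single elementary fact driving all three estimates is that every pairwise coordinate difference satisfies $|x_i-x_j| \geq d(x)$, while every pairwise difference of components of $v$ satisfies $|v_i-v_j| \leq \sqrt{2}\,|v|$.

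For the inclusion term, $|d\iota_x(v)| = |v|$, and since $M$ is compact we have $d(x) \leq \diam(M)$ for all $x \in X_n(M)$, hence this term is at most $\diam(M)\cdot |v|_{g_0}$. For the sphere factors I would differentiate $u\mapsto u/|u|$ on $\mathbb{R}^d\setminus\{0\}$, which sends $w$ to $|u|^{-1}$ times the orthogonal projection of $w$ onto $u^\perp$; composing with $v\mapsto v_i-v_j$ and noting that the round metric on $S^{d-1}$ agrees with the one induced from $\mathbb{R}^d$, this gives $|d(\pi_{ij})_x(v)| \leq |v_i-v_j|/|x_i-x_j| \leq \sqrt{2}\,|v|_{g_0}$.

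The one genuinely delicate point is the $[0,\infty]$-factors, because of the nonstandard metric $g_{exp}$, for which $|\partial_x|_{g_{exp}} = e^{-x}$; equivalently, for a function $f$ valued in $[0,\infty)$ one has $|df(v)|_{g_{exp}} = e^{-f(x)}\,|df(v)|$. Writing $s = s_{ijk} = a/b$ with $a = |x_i-x_j|$ and $b = |x_i-x_k|$, the quotient rule gives $ds(v) = b^{-1}(da(v) - s\, db(v))$; since $|da(v)|,|db(v)| \leq \sqrt{2}\,|v|$ and $b \geq d(x)$, this yields $|ds(v)| \leq \sqrt{2}\,(1+s)\,|v|/d(x)$. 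Multiplying by the weight $e^{-s}$ and invoking the elementary inequality $\sup_{s\geq 0}(1+s)e^{-s} = 1$ (the function is nonincreasing, with derivative $-se^{-s}$) gives $|d(s_{ijk})_x(v)|_{g_{exp}} \leq \sqrt{2}\,|v|_{g_0}$, uniformly in $x$; in particular the bound does not degenerate as $s_{ijk}\to\infty$, i.e. near the boundary of the compactification. This cancellation between the blow-up of $(1+s)$ and the decay $e^{-s}$ is the step I expect to require the most care, everything else being bookkeeping.

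Summing the three families of bounds over the factors of $A_n[\mathbb{R}^d]$ then gives $|v|_g^2 \leq \bigl(\diam(M)^2 + 2\binom{n}{2} + 2\binom{n}{3}\bigr)\,|v|_{g_0}^2$, so the lemma holds with $C = \bigl(\diam(M)^2 + 2\binom{n}{2} + 2\binom{n}{3}\bigr)^{1/2}$, a constant depending only on $n$ (and on the fixed embedded manifold $M \subset \mathbb{R}^d$, which we regard as part of the data).
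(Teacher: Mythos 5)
Your proposal is correct and follows essentially the same route as the paper: split $|v|_g^2$ over the factors of $A_n[\mathbb{R}^d]$, bound the $\iota$ and $\pi_{ij}$ terms using $|x_i-x_j|\geq d(x)$ and compactness of $M$, and tame the $s_{ijk}$ terms by playing the exponential weight against the growth of the quotient (the paper uses $e^{-x}\leq 1/x$ where you use $(1+s)e^{-s}\leq 1$, an immaterial difference). The constants differ slightly but both depend only on $n$ and the fixed embedding of $M$, as required.
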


\begin{proof}
Since $\alpha_n$ is a product map, in order to get a bound on the norm $|v|_g$,
we need to bound the norms of the vectors $D\pi_{ij}(v)$ and $Ds_{ijk}(v)$. 
Let $x = (x_1,\ldots,x_n) \in X_n(M)$ and let $v = (v_1,\ldots,v_n) \in T_x(X_n(M))$. 
There exists $\epsilon > 0$ such that $x+tv \in X_n(\mathbb{R}^d)$ for every $t < \epsilon$.
Note that even though the path $\{x+tv\}_{t\in[0,\epsilon]}$ is not contained in $X_n(M)$, 
it still represents the tangent vector $v \in T_x(X_n(M)) < T_x(X_n(\mathbb{R}^d))$. 
We have \begin{align*}
D\pi_{ij}(v) &= \frac{d}{dt}_{|t=0} \, \pi_{ij}(x+tv)&\\
&= \frac{d}{dt}_{|t=0} \, \frac{x_i-x_j + t(v_i-v_j)}{|x_i-x_j + t(v_i-v_j)|}&\\
&=\frac{v_i-v_j}{|x_i-x_j|} - \frac{(x_i-x_j)\langle v_i-v_j, x_i-x_j \rangle}{|x_i-x_j|^3}.
\end{align*}

By definition $D\pi_{ij}(v)$ is a vector in $\mathbb{R}^d$ tangent to a $(d-1)$-dimensional sphere
at point $\pi_{ij}(x)$. By $|D\pi_{ij}(v)|$ we denote its Euclidean length. 

Applying the inequalities $\langle v_i-v_j, x_i-x_j \rangle \leq |v_i-v_j||x_i-x_j|$ and 
$|v_i-v_j| \leq |v_i| + |v_j|$, we obtain:
\begin{align*}
|D\pi_{ij}(v)| &\leq 2\frac{1}{|x_i-x_j|}(|v_i|+|v_j|)&\\
& \leq \frac{2}{d(x)}(|v_i|+|v_j|)&\\
& \leq \frac{2}{d(x)}(\sum_i |v_i|).
\end{align*}

Similarly, for $s_{ijk}$ we compute (already in the map):
\begin{align*}
De^{-s_{ijk}}(v) &= \frac{d}{dt}_{|t=0} \, e^{-s_{ijk}(x+tv)} &\\
&= \frac{d}{dt}_{|t=0} \, e^{-\frac{|x_i-x_j + t(v_i-v_j)|}{|x_i-x_k + t(v_i-v_k)|}}&\\
&= \Big[\frac{\langle v_i-v_j, x_i-x_j \rangle}{|x_i-x_j||x_i-x_k|} - \frac{|x_i-x_j|\langle x_i-x_k, v_i-v_k \rangle}{|x_i-x_k|^3} \Big]e^{-\frac{|x_i-x_j|}{|x_i-x_k|}}.&
\end{align*}

By definition $|Ds_{ijk}(v)|_{g_{exp}} = |De^{-s_{ijk}}(v)|$. Using similar inequalities as before we get:
\begin{align*}
|Ds_{ijk}(v)|_{g_{exp}} 
\leq \Big[ \frac{|v_i|+|v_j|}{|x_i-x_k|} + \frac{(|v_i|+|v_k|)|x_i-x_j|}{|x_i-x_k|^2}\Big]e^{-\frac{|x_i-x_j|}{|x_i-x_k|}}.
\end{align*}

Applying the inequality $e^{-x} \leq \frac{1}{x}$ to the last term we obtain: 

\begin{align*}
|Ds_{ijk}(v)|_{g_{exp}}&\leq \Big[ \frac{|v_i|+|v_j|}{|x_i-x_k|} + \frac{(|v_i|+|v_k|)|x_i-x_j|}{|x_i-x_k|^2}\Big]\frac{|x_i-x_k|}{|x_i-x_j|}&\\
&= \frac{|v_i|+|v_j|}{|x_i-x_j|} + \frac{|v_i|+|v_k|}{|x_i-x_k|}&\\
&\leq \frac{1}{d(x)}(2|v_i|+|v_j|+|v_k|)&\\
&\leq \frac{2}{d(x)}(\sum_i |v_i|).
\end{align*}

Finally, since $\sum_i |v_i| \leq\ \sqrt{n}|v|$, we get:
\[ |D\pi_{ij}(v)| \leq \frac{\sqrt{n}}{d(x)}|v|, \]
\[ |Ds_{ijk}(v)|_{g_{exp}} \leq \frac{2\sqrt{n}}{d(x)}|v|. \]

We assume that $M$ is compact, therefore there exists $A>0$ such that $d(x) \leq A$ for every $x \in X_n(M)$. 
Now we can bound the pulled-back metric $|\cdot|_g$: 
\begin{align*}
|v|^2_g &= |D\iota(v)|^2 + \sum_{i,j}|D\pi_{ij}(v)|^2 + \sum_{i,j,k}|Ds_{ijk}(v)|_{g_{exp}}^2 &\\
&\leq \frac{A^2}{d(x)^2}|v|^2 + \sum_{i,j}\frac{n}{d(x)^2}|v|^2 + \sum_{i,j,k}\frac{4n}{d(x)^2}|v|^2 &\\
&= C\frac{|v|^2}{d(x)^2} = C \cdot |v|^2_{g_0},
\end{align*}
for $C = A^2 + n {n \choose 2} + 4n {n \choose 3}$.\end{proof}

\begin{rmk}
In the proof of Lemma \ref{lma: two metrics}, the inequality $e^{-x} < \frac{1}{x}$
is used to show the bound on the length of $Ds_{ijk}$.
Indeed, the choice of the function $e^{-x} \colon [0,\infty] \to [0,1]$ to define a smooth structure and the metric on 
$[0,\infty]$ is not completely arbitrary.  
For a different identification, e.g. $ln(\frac{x+e}{x+1}) \colon [0,\infty] \to [0,1]$, we get a different smooth structure 
on $[0,\infty]$ (in the sense, that the identity map is not smooth) and a metric which is not equivalent to $g_{exp}$. 
Then after pull-back by $\alpha_n$ we get a metric on $X_n(M)$ which
is not equivalent to $g$ and for this metric Lemma \ref{lma: two metrics} might not hold. 
\end{rmk}

\begin{proof}[Proof of Proposition \ref{prop: analysis}]

We need to show that \[\int_{X_n(M)\setminus Z \cup (\phi_1)^{-1}(Z)} l_g(\lambda(x,\overline{\phi}))  \, d\mu^{\otimes n}(x) \leq A_1 \cdot l_1(\overline{\phi}) +B_1,\] where $\overline{\phi} = \{\phi_t\}$ is an isotopy in $\G$ such that $\phi_0 = 1$. 

A similar inequality was proven in \cite{MichalM-diameter} for a metric which is equivalent to $g_0$.
Let us first describe this metric.  
By $d_M$ denote the geodesic metric on $M$ induced by the restriction of the standard Riemannian metric on $\mathbb{R}^d$. 
Let $x \in X_n(M)$ and $v \in T_x(X_n(M))$. We define \[ |v|_{g_b} = \frac{|v|}{d_M(x)},\] where $d_M(x) = \min\{ d_M(x_i,x_j)~\colon~i \neq j\}$ and $|v|$ is the Euclidean length of $v$ seen as a vector in $(\mathbb{R}^d)^n$.  
The difference between the function $d$ used to define $g_0$ and $d_M$ is that
in $d$ we measure the distance between points $x_i$ and $x_j$ in the ambient space 
$\mathbb{R}^d$ and in $d_M$ inside $M$. Since $M$ is compact, clearly $d$ and $d_M$ are equivalent 
and consequently $g_b$ and $g_0$ are equivalent. 

It follows from \cite[Lemma 5.2]{MichalM-diameter} that 
\[\int_{X_n(M)\setminus Z \cup (\phi_1)^{-1}(Z)} l_{g_b}(\{\phi_t \cdot x\})  \, d\mu^{\otimes n}(x) \leq C' \cdot l_1(\overline{\phi}).\] 
Recall that $\{\phi_t \cdot x\}$ is a path in $X_n(M)$ ($\phi_t$ acts on $x \in X_n(M)$ component-wise).
Since $g_0$ and $g_b$ are equivalent, this inequality holds as well for $g_0$ (with a possibly different constant). 

Let us now focus on the metric $g$. 
Since the geodesic metric $d_g$ defined by $g$ extends to the compactification of $X_n(M)$, the diameter of $(X_n(M), d_g)$ is finite.
We can choose the system of short paths $\gamma(x)$ such that $l_g(\gamma(x)) \leq D$ 
for some $D>0$ and every $x \in X_n(M)\setminus Z$.

Thus for every $x \in X_n(M) \setminus Z \cup (\phi_1)^{-1}(Z)$ we have 
\[ l_g(\lambda(x,\overline{\phi})) \leq l_g(\gamma(x)) + l_g(\{\phi_t \cdot x\}) + l_g(\gamma(\phi_1(x))) \leq 2D + l_g(\{\phi_t \cdot x\}). \]

Finally, due to Lemma \ref{lma: two metrics}, we have $l_g(\{\phi_t \cdot x\}) \leq C \cdot l_{g_0}(\{\phi_t \cdot x\})$ and the proposition follows.
\end{proof}

\bibliographystyle{amsplain}
\bibliography{LpSurfacesRefs}

\providecommand{\bysame}{\leavevmode\hbox to3em{\hrulefill}\thinspace}
\providecommand{\MR}{\relax\ifhmode\unskip\space\fi MR }
\providecommand{\MRhref}[2]{%
  \href{http://www.ams.org/mathscinet-getitem?mr=#1}{#2}
}
\providecommand{\href}[2]{#2}
\begin{thebibliography}{10}

\bibitem{ArnoldGeodesics}
Vladimir~I. Arnold, \emph{Sur la g\'{e}om\'{e}trie diff\'{e}rentielle des
  groupes de {L}ie de dimension infinie et ses applications a l'hydrodynamique
  des fluides parfaits}, Ann. Inst. Fourier \textbf{16} (1966), 319--361.

\bibitem{ArnoldKhesin}
Vladimir~I. Arnold and Boris~A. Khesin, \emph{Topological methods in
  hydrodynamics}, Applied Mathematical Sciences, vol. 125, Springer-Verlag,
  1998.

\bibitem{AxelrodSinger-comp}
Scott Axelrod and I.~M. Singer, \emph{Chern-{S}imons perturbation theory.
  {II}}, J. Differential Geom. \textbf{39} (1994), no.~1, 173--213.

\bibitem{BenaimGambaudo}
Michel Benaim and Jean-Marc Gambaudo, \emph{Metric properties of the group of
  area preserving diffeomorphisms}, Trans. Amer. Math. Soc. \textbf{353}
  (2001), no.~11, 4661--4672.

\bibitem{Birman}
Joan~S. Birman, \emph{Mapping class groups and their relationship to braid
  groups}, Comm. Pure Appl. Math. \textbf{22} (1969), 213--238. \MR{0243519 (39
  \#4840)}

\bibitem{BrandenburskyKnots}
Michael Brandenbursky, \emph{On quasi-morphisms from knot and braid
  invariants}, J. Knot Theory Ramifications \textbf{20} (2011), no.~10,
  1397--1417.

\bibitem{BrandenburskyLpMetrics}
\bysame, \emph{Quasi-morphisms and ${L}^p$-metrics on groups of
  volume-preserving diffeomorphisms}, J. Topol. Anal. \textbf{4} (2012), no.~2,
  255--270.

\bibitem{2015-IJM}
Michael Brandenbursky, \emph{Bi-invariant metrics and quasi-morphisms on groups
  of {H}amiltonian diffeomorphisms of surfaces}, Internat. J. Math. \textbf{26}
  (2015), no.~9, 1550066, 29 pages. \MR{3391653}

\bibitem{BrandenburskyKedra1}
Michael Brandenbursky and Jarek K\c{e}dra, \emph{On the autonomous norm on the
  group of area-preserving diffeomorphisms of the 2-disc}, Algebr. Geom. Topol.
  \textbf{13} (2013), 795--816.

\bibitem{BrandenburskyKedra2}
\bysame, \emph{Quasi-isometric embeddings into diffeomorphism groups}, Groups,
  Geometry and Dynamics \textbf{7} (2013), no.~3, 523--534.

\bibitem{2018-CCM}
Michael Brandenbursky, Jarek Kedra, and Egor Shelukhin, \emph{On the autonomous
  norm on the group of {H}amiltonian diffeomorphisms of the torus}, Commun.
  Contemp. Math. \textbf{20} (2018), no.~2, 1750042, 27. \MR{3730755}

\bibitem{2019-BM1}
Michael Brandenbursky and Micha\l\hspace{4px} Marcinkowski, \emph{Entropy and
  quasimorphisms}, J. Mod. Dyn. \textbf{15} (2019), 143--163. \MR{3983639}

\bibitem{BrandShelAut}
Michael Brandenbursky and Egor Shelukhin, \emph{On the ${L}^p$-geometry of
  autonomous {H}amiltonian diffeomorphisms of surfaces}, Math. Res. Lett.
  \textbf{22} (2015), no.~5, 1275--1294.

\bibitem{BrandShelS2}
\bysame, \emph{The {$L^p$}-diameter of the group of area-preserving
  diffeomorphisms of {$S^2$}}, Geom. Topol. \textbf{21} (2017), no.~6,
  3785--3810.

\bibitem{BridsonHaefliger-book}
Martin~R. Bridson and Andr\'{e} Haefliger, \emph{Metric spaces of non-positive
  curvature}, Grundlehren der Mathematischen Wissenschaften [Fundamental
  Principles of Mathematical Sciences], vol. 319, Springer-Verlag, Berlin,
  1999.

\bibitem{CalabiHomomorphism}
Eugenio Calabi, \emph{On the group of automorphisms of a symplectic manifold},
  Problems in analysis (Lectures at the Sympos. in honor of Salomon Bochner,
  Princeton Univ., Princeton, N.J., 1969), Princeton Univ. Press, 1970,
  pp.~1--26.

\bibitem{CalegariScl}
Danny Calegari, \emph{scl}, MSJ Memoirs, vol.~20, Mathematical Society of
  Japan, Tokyo, 2009.

\bibitem{CrispWiest}
John Crisp and Bert Wiest, \emph{Quasi-isometrically embedded subgroups of
  braid and diffeomorphism groups}, Trans. Amer. Math. Soc. \textbf{359}
  (2007), no.~11, 5485--5503.

\bibitem{EbinPrestonMisiolek}
D.~G. Ebin, G.~Misio{\l}ek, and S.~C. Preston, \emph{Singularities of the
  exponential map on the volume-preserving diffeomorphism group}, Geom. Funct.
  Anal. \textbf{16} (2006), no.~4, 850--868.

\bibitem{EbinSymp}
David~G. Ebin, \emph{Geodesics on the symplectomorphism group}, Geom. Funct.
  Anal. \textbf{22} (2012), no.~1, 202--212.

\bibitem{EbinMarsden}
David~G. Ebin and Jerrold Marsden, \emph{Groups of diffeomorphisms and the
  motion of an incompressible fluid.}, Ann. of Math. (2) \textbf{92} (1970),
  102--163.

\bibitem{Efremovic}
V.~Efremovi\v{c}, \emph{The proximity geometry of {R}iemannian manifolds},
  Uspehi Mat. Nauk \textbf{8} (1953), no.~189.

\bibitem{EliashbergRatiu}
Yakov Eliashberg and Tudor Ratiu, \emph{The diameter of the symplectomorphism
  group is infinite}, Invent. Math. \textbf{103} (1991), no.~2, 327–--340.

\bibitem{FathiThese}
Albert Fathi, \emph{Transformations et hom\'eomorphismes pr\'eservant la
  mesure. {S}yst\`emes dynamiques minimaux}, Ph.D. thesis, Orsay, 1980.

\bibitem{FultonMacPherson-comp}
William Fulton and Robert MacPherson, \emph{A compactification of configuration
  spaces}, Ann. of Math. (2) \textbf{139} (1994), no.~1, 183--225.

\bibitem{GambaudoPecou}
J.-M. Gambaudo and E.~E. P\'{e}cou, \emph{Dynamical cocycles with values in the
  {A}rtin braid group}, Ergodic Theory Dynam. Systems \textbf{19} (1999),
  no.~3, 627--641.

\bibitem{GambaudoGhysEnlacements}
Jean-Marc Gambaudo and \'{E}tienne Ghys, \emph{Enlacements asymptotiques},
  Topology \textbf{36} (1997), no.~6, 1355--1379.

\bibitem{GambaudoGhysCommutators}
\bysame, \emph{Commutators and diffeomorphisms of surfaces}, Ergodic Theory
  Dynam. Systems \textbf{24} (2004), no.~5, 1591--1617.

\bibitem{GambaudoLagrange}
Jean-Marc Gambaudo and Maxime Lagrange, \emph{Topological lower bounds on the
  distance between area preserving diffeomorphisms}, Bol. Soc. Brasil. Mat.
  (N.S.) \textbf{31} (2000), no.~1, 9--27.

\bibitem{Goldberg}
Charles~H. Goldberg, \emph{An exact sequence of braid groups}, Math. Scand.
  \textbf{33} (1973), 69--82. \MR{334182}

\bibitem{Haissinsky}
Peter Ha\"{\i}ssinsky, \emph{L'invariant de {C}alabi pour les
  hom\'{e}omorphismes quasiconformes du disque}, C. R. Math. Acad. Sci. Paris
  \textbf{334} (2002), no.~8, 635--638.

\bibitem{Hamenstadt}
Ursula Hamenstadt, \emph{Geometry of the mapping class group {II}: A
  biautomatic structure}, Preprint arXiv:0912.0137, 2009.

\bibitem{Hum}
Vincent Humili\`ere, \emph{The {C}alabi invariant for some groups of
  homeomorphisms}, J. Symplectic Geom. \textbf{9} (2011), no.~1, 107--117.

\bibitem{Ishida}
Tomohiko Ishida, \emph{Quasi-morphisms on the group of area-preserving
  diffeomorphisms of the 2-disk via braid groups}, Proc. Amer. Math. Soc. Ser.
  B \textbf{1} (2014), 43--51.

\bibitem{KapovichRAAGs}
Michael Kapovich, \emph{R{AAG}s in {H}am}, Geom. Funct. Anal. \textbf{22}
  (2012), no.~3, 733--755.

\bibitem{TuraevKassel}
Christian Kassel and Vladimir Turaev, \emph{Braid groups}, Graduate Texts in
  Mathematics, vol. 247, Springer, New York.

\bibitem{K-cont}
Michael Khanevsky, \emph{Quasimorphisms on surfaces and continuity in the
  {H}ofer norm}, Preprint, arXiv:1906.08429, 2019.

\bibitem{KhesinWendt}
Boris Khesin and Robert Wendt, \emph{The geometry of infinite-dimensional
  groups}, Ergebnisse der Mathematik und ihrer Grenzgebiete. 3. Folge. A Series
  of Modern Surveys in Mathematics [Results in Mathematics and Related Areas.
  3rd Series. A Series of Modern Surveys in Mathematics], vol.~51,
  Springer-Verlag, Berlin, 2009.

\bibitem{KimKoberdaAntiTrees}
Sang-hyun Kim and Thomas Koberda, \emph{Anti-trees and right-angled {A}rtin
  subgroups of braid groups}, Geom. Topol. \textbf{19} (2015), no.~6,
  3289--3306.

\bibitem{Kontsevich-ECM}
Maxim Kontsevich, \emph{Feynman diagrams and low-dimensional topology}, First
  {E}uropean {C}ongress of {M}athematics, {V}ol. {II} ({P}aris, 1992), Progr.
  Math., vol. 120, Birkh\"{a}user, Basel, 1994, pp.~97--121.

\bibitem{Kontsevich-comp}
\bysame, \emph{Operads and motives in deformation quantization}, vol.~48, 1999,
  Mosh\'{e} Flato (1937--1998), pp.~35--72.

\bibitem{Kupers-notop}
Alexander Kupers, \emph{There is no topological {Fulton-MacPherson}
  compactification}, Preprint, arXiv:2011.14855, 2020.

\bibitem{MichalM-diameter}
Micha\l\hspace{4px}Marcinkowski, \emph{A short proof that the {$L^p$}-diameter
  of {$Diff_0(S,area)$} is infinite}, Preprint arXiv:2010.03000, 2020.

\bibitem{Milnor}
J.~Milnor, \emph{A note on curvature and fundamental group}, J. Differential
  Geometry \textbf{2} (1968), 1--7.

\bibitem{PolterovichDynamicsGroups}
Leonid Polterovich, \emph{Floer homology, dynamics and groups}, Morse theoretic
  methods in nonlinear analysis and in symplectic topology (Dordrecht), NATO
  Sci. Ser. II Math. Phys. Chem., vol. 217, Springer, 2006, pp.~417–--438.

\bibitem{PyTorus}
Pierre Py, \emph{Quasi-morphismes de {C}alabi et graphe de {R}eeb sur le
  tore.}, C. R. Math. Acad. Sci. Paris \textbf{343} (2006), no.~5, 323–--328.

\bibitem{PySurfacesQm}
\bysame, \emph{Quasi-morphismes et invariant de {C}alabi}, Ann. Sci. E'cole
  Norm. Sup. (4) \textbf{39} (2006), no.~1, 177--195.

\bibitem{ESER}
Egor Shelukhin, \emph{``{E}nlacements asymptotiques'' revisited}, Ann. Math.
  Qu\'e. \textbf{39} (2015), no.~2, 205--208.

\bibitem{ShnirelmanGeometry}
Alexander Shnirelman, \emph{The geometry of the group of diffeomorphisms and
  the dynamics of an ideal incompressible fluid}, Mat. Sb. (N.S.)
  \textbf{128(170)} (1985), no.~1, 82--109, 144.

\bibitem{ShnirelmanGeneralized}
\bysame, \emph{Generalized fluid flows, their approximation and applications},
  Geom. Funct. Anal. \textbf{4} (1994), no.~5, 586--620.

\bibitem{ShnirelmanRuled}
\bysame, \emph{Microglobal analysis of the {E}uler equations}, J. Math. Fluid
  Mech. \textbf{7} (2005), no.~suppl. 3, S387--S396.

\bibitem{Sinha-comp}
Dev~P. Sinha, \emph{Manifold-theoretic compactifications of configuration
  spaces}, Selecta Math. (N.S.) \textbf{10} (2004), no.~3, 391--428.

\bibitem{Smale}
Steven Smale, \emph{Diffeomorphisms of the 2-sphere}, Proc. Amer. Math. Soc.
  (1959), no.~10, 621--626.

\bibitem{Svarc}
A.~S. \v{S}varc, \emph{A volume invariant of coverings}, Doklady Akademii Nauk
  SSSR \textbf{105} (1955), 32--34.

\end{thebibliography}

\vspace{3mm}

\textsc{Michael Brandenbusrky, Department of Mathematics, Ben Gurion University, Be’er Sheva 84105, Israel}\\
\emph{E-mail address:} \verb"brandens@math.bgu.ac.il"

\vspace{3mm}

\textsc{Micha\l{} Marcinkowski, Institute of Mathematics, Wroc\l{}aw University, pl. Grunwaldzki~2/4,
50-384 Wroc\l{}aw, Poland}\\
\emph{E-mail address:} \verb"marcinkow@math.uni.wroc.pl"

\vspace{3mm}

\textsc{Egor Shelukhin, Department of Mathematics and Statistics,
	University of Montreal, C.P. 6128 Succ.  Centre-Ville Montreal, QC, H3C 3J7, Canada}\\
\emph{E-mail address:} \verb"shelukhin@dms.umontreal.ca"

\end{document}